\documentclass[11pt, a4paper,leqno]{article}

\usepackage[OT1]{fontenc}
\usepackage{amsthm,amsmath, amssymb, amsopn, amsfonts}
\usepackage[colorlinks,citecolor=blue,urlcolor=blue]{hyperref}
\usepackage{footnote}
\usepackage{graphicx}
\usepackage{tikz}
\usetikzlibrary{decorations.fractals}
\usetikzlibrary{patterns,arrows,shapes,decorations.pathreplacing}
\usepackage{pgfplots}
\usepackage{stmaryrd} 
\usepackage[margin=1in]{geometry}




\numberwithin{equation}{section}
\theoremstyle{plain}
\newtheorem{Definition}{Definition}[section]
\newtheorem{Remark}{Remark}[section]
\newtheorem{Theorem}{Theorem}[section]
\newtheorem{Lemma}{Lemma}[section]
\newtheorem{Proposition}{Proposition}[section]
\newtheorem{Corollary}{Corollary}[section]
\newtheorem{Assumption}{Assumption}[section]
\newtheorem{Example}{Example}[section]
\def \E{\mathbb{E}}
\def \F{\mathbb{F}}
\def \N{\mathbb{N}}
\def \P{\mathbb{P}}
\def \Q{\mathbb{Q}}
\def \R{\mathbb{R}}
\def \X{\mathbb{X}}
\def \Bc{{\mathcal B}}
\def \Cc{{\mathcal C}}
\def \Ec{{\mathcal E}}
\def \Fc{{\mathcal F}}
\def \Lc{{\mathcal L}}

\def \Tc{{\mathcal T}}
\def \eps{\varepsilon}
\def \x{\times}
\def\esssup{\mathop{\rm ess\, sup}} 

\def \indi{\mathbf{1}}
\def \brace#1{\left\{#1\right\}}
\def \And{\;\mbox{ and }\;}

\DeclareMathOperator{\sech}{sech}

\title{Time-consistent stopping under decreasing impatience\thanks{For thoughtful advice and comments, we thank Erhan Bayraktar, Ren\'{e} Carmona, Samuel Cohen, Ivar Ekeland, Paolo Guasoni, Jan Ob\l\'{o}j, Traian Pirvu, Ronnie Sircar, and Xunyu Zhou, and seminar participations at Florida State University, Princeton University, and University of Oxford. Special gratitude goes to Erhan Bayraktar for bringing this problem to the first author's attention. Special gratitude also goes to Traian Pirvu for introducing the authors to know each other.
Y.-J. Huang is partially supported by the University of Colorado (11003573).
A. Nguyen-Huu is partially supported by the \textit{Energy and Prosperity} Chair.}}

\author{Yu-Jui Huang\thanks{Department of Applied Mathematics, University of Colorado, Boulder, CO 80309, USA. Corresponding Author : yujui.huang@colorado.edu} \and Adrien Nguyen-Huu\thanks{LAMETA, Universit\'e de Montpellier, Montpellier, France.}}
\date{}

\begin{document}
\maketitle

\begin{abstract}
Under non-exponential discounting, we develop a dynamic theory for stopping problems in continuous time. 
Our framework covers discount functions that induce decreasing impatience. 
Due to the inherent time inconsistency, 
we look for equilibrium stopping policies, formulated as fixed points of an operator. 
Under appropriate conditions, fixed-point iterations converge to equilibrium stopping policies. 
This iterative approach corresponds to the hierarchy of strategic reasoning in Game Theory, and provides ``agent-specific'' results: it assigns one specific equilibrium stopping policy to each agent according to her initial behavior. In particular, it leads to a precise mathematical connection between the naive behavior and the sophisticated one.  Our theory is illustrated in a real options model.
\end{abstract}

\textbf{Keywords:} time inconsistency \and optimal stopping \and hyperbolic discounting \and decreasing impatience \and subgame-perfect Nash equilibrium

\textbf{JEL:} C61; D81; D90; G02

\textbf{2010 Mathematics Subject Classification:} 60G40; 91B06

\section{Introduction} \label{sec:intro}

Time inconsistency is known to exist in stopping decisions, such as casino gambling in \cite{Barberis12} and \cite{ebert2015until}, optimal stock liquidation in \cite{XZ13}, and real options valuation in \cite{grenadier2007investment}. A general treatment, however, has not been proposed in continuous-time models. In this article, we develop a dynamic theory for time-inconsistent stopping problems in continuous time, under non-exponential discounting. 
In particular, we focus on log sub-additive discount functions (Assumption \ref{asm:DI}), which capture \textit{decreasing impatience}, an acknowledged feature of empirical discounting in Behavioral Economics; see e.g. \cite{Thaler81}, \cite{LT89}, and  \cite{LP92}. Hyperbolic and quasi-hyperbolic discount functions are special cases under our consideration.

The seminal work Strotz \cite{Stroz1956myopia} identifies three types of agents under time inconsistency -- the \emph{naive}, the \emph{pre-committed}, and the \emph{sophisticated}. Among them, only the sophisticated agent  takes the possible change of future preferences seriously, and works on {\it consistent planning}:  she aims to find a strategy that once being enforced over time, none of her future selves would want to deviate from it. How to precisely formulate such a {\it sophisticated strategy} had been a challenge in continuous time. For stochastic control, Ekeland and Lazrak \cite{EL06} resolved this issue by defining sophisticated controls as subgame-perfect Nash equilibria in a continuous-time inter-temporal game of multiple selves. This has aroused vibrant research on time inconsistency in mathematical finance; see e.g.  \cite{Ekeland2008investment}, \cite{Ekeland2012time}, \cite{Hu2012time}, \cite{Yong2012time}, \cite{bjork2014mean}, \cite{dong2014time}, \cite{Bjork2014theory-discrete}, and \cite{BKM16}. There is, nonetheless, no equivalent development for stopping problems.


This paper contributes to the literature of time inconsistency in three ways. First, we provide a precise definition of sophisticated stopping policy (or, equilibrium stopping policy) in continuous time (Definition~\ref{def:equilibrium}). Specifically, we introduce the operator $\Theta$ in \eqref{Theta}, which describes the game-theoretic reasoning of a sophisticated agent. Sophisticated policies are formulated as fixed points of $\Theta$, which connects to the concept of subgame-perfect Nash equilibrium invoked in \cite{EL06}.  

Second, we introduce a new, iterative approach for finding equilibrium strategies. For any initial stopping policy $\tau$, we apply the operator $\Theta$ to $\tau$ repetitively until it converges to an equilibrium stopping policy. Under appropriate conditions, this fixed-point iteration indeed converges (Theorem~\ref{thm:main 2}), which is the main result of this paper. Recall that the standard approach for finding equilibrium strategies in continuous time is solving a system of non-linear equations, as proposed in \cite{Ekeland2008investment} and \cite{BKM16}. Solving this system of equations is difficult; and even when it is solved (as in the special cases in \cite{Ekeland2008investment} and \cite{BKM16}), we only obtain one particular equilibrium, and it is unclear how other equilibrium strategies can be found. Our iterative approach can be useful here: we find different equilibria simply by starting the fixed-point iteration with different initial strategies $\tau$. In some cases, we are able to find {\it all} equilibria; see Proposition~\ref{prop:entire Ec}. 

Third, when an agent starts to do game-theoretic reasoning and look for equilibrium strategies, she is not satisfied with an arbitrary equilibrium. Instead, she works on improving her initial strategy to turn it into an equilibrium. This improving process is absent from \cite{EL06}, \cite{Ekeland2008investment}, \cite{BKM16}, and subsequent research, although well-known in Game Theory as the hierarchy of strategic reasoning in \cite{Stahl93} and \cite{SW94}. Our iterative approach specifically represents this improving process: for any initial strategy $\tau$, each application of $\Theta$ to $\tau$ corresponds to an additional level of strategic reasoning. As a result, the iterative approach complements the existing literature of time inconsistency in that it not only facilitates the search for equilibrium strategies, but provides ``agent-specific'' equilibria: it assigns one specific equilibrium to each agent according to her initial behavior.

Upon completion of our paper, we noticed the recent work Pedersen and Peskir \cite{pedersen2016optimal} on mean-variance optimal stopping. They introduced ``dynamic optimality'' to deal with time inconsistency. As explained in detail in \cite{pedersen2016optimal}, this new concept is different from {\it consistent planning} in Strotz \cite{Stroz1956myopia}, and does not rely on game-theoretic modeling. Therefore, our equilibrium stopping policies are different from their dynamically optimal stopping times. That being said, a few connections between our paper and \cite{pedersen2016optimal} do exist, as pointed out in Remarks~\ref{rem:static optimality}, \ref{rem:dynamic optimality}, and \ref{rem:PP and ours}.    

The paper is organized as follows.
In Section \ref{sec:motivation}, we introduce the setup of our model, and demonstrate time inconsistency in stopping decisions through examples. 
In Section \ref{sec:equilibrium},
we formulate the concept of equilibrium for stopping problems in continuous time, search for equilibrium strategies via fixed-point iterations, and establish the required convergence result. 
Section \ref{sec:examples} illustrates our theory thoroughly in a real options model. 
Most of the proofs are delegated to appendices.


\section{Preliminaries and Motivation}
\label{sec:motivation}


Consider the canonical space $\Omega:= \{\omega\in C([0,\infty);\R^d):\omega_0=0\}$. 
Let $\{W_t\}_{t\ge 0}$ be the coordinate mapping process $W_t(\omega) = \omega_t$, 
and $\F^W =\{\Fc^W_s\}_{s\ge 0}$ be the natural filtration generated by $W$. 
Let $\P$ be the Wiener measure on $(\Omega,\Fc^W_\infty)$, 
where $\Fc^W_\infty:= \bigcup_{s\ge 0} \Fc^W_s$. 
For each $t\ge 0$, we introduce the filtration $\F^{t,W} =\{\Fc^{t,W}_s\}_{s\ge 0}$ with 
\begin{equation*}
\Fc^{t,W}_s = \sigma(W_{u\vee t}-W_t:0\le u\le s),
\end{equation*}
and let $\F^t =\{\Fc^t_s\}_{s\ge 0}$ be the $\P$-augmentation of $\F^{t,W}$. We denote by $\Tc_t$ the collection of all $\F^t$-stopping times $\tau$ with $\tau\ge t$ a.s. For the case where $t=0$, we simply write $\F^0=\{\Fc^0_s\}_{s\ge 0}$ as $\F_s = \{\Fc_s\}_{s\ge 0}$, and $\Tc_0$ as $\Tc$.

\begin{Remark}
	For any $0\le s\le t$, $\Fc^t_s$ is the $\sigma$-algebra generated by only the $\P$-negligible sets. Moreover, for any $s,t\ge 0$, $\Fc^t_s$-measurable random variables are independent of $\Fc_t$; see Bouchard and Touzi \cite[Remark 2.1]{BT11} for a similar set-up.
\end{Remark}

Consider the space $\X:= [0,\infty)\times\R^d$, equipped with the Borel $\sigma$-algebra $\Bc(\X)$. Let $X$ be a continuous-time Markov process given by $X_s:= f(s,W_s)$, $s\ge 0$, for some measurable function $f:\X\mapsto\R$. Or, more generally, for any $\tau\in\Tc$ and $\R^d$-valued $\Fc_\tau$-measurable $\xi$, let $X$ be the solution to the stochastic differential equation
\begin{equation}\label{SDE}
dX_t = b(t,X_t) dt + \sigma (t,X_t) dW_t\quad\hbox{for}\ t\ge \tau,\quad\quad \hbox{with}\   X_\tau = \xi\ \hbox{a.s.}
\end{equation}
We assume that $b:\X\mapsto\R$ and $\sigma:\X\mapsto\R$ satisfy Lipschitz and linear growth conditions in $x\in\R^d$, uniformly in $t\in[0,\infty)$. Then, for any $\tau\in\Tc$ and $\R^d$-valued $\Fc_\tau$-measurable $\xi$ with $\E[|\xi|^2]<\infty$, \eqref{SDE} admits a unique strong solution.

For any $(t,x)\in\X$, we denote by $X^{t,x}$ the solution to \eqref{SDE} with $X_t=x$, and by $\E^{t,x}$ the expectation conditioned on $X_t=x$.

\subsection{Classical Optimal Stopping}\label{subsec:classical optimal stopping}

Consider a {\it payoff function} $g:\R^d\mapsto\R$, assumed to be nonnegative and  continuous, and a {\it discount function} $\delta:\R_+\mapsto [0,1]$, assumed to be continuous, decreasing, and satisfy $\delta(0)=1$. Moreover, we assume that
\begin{equation}\label{dominated}
\E^{t,x}\bigg[\sup_{t\le s\le \infty}\delta(s-t) g(X^{}_s)\bigg]<\infty, \quad \forall (t,x)\in \X,
\end{equation}
where we interpret $\delta(\infty-t)g(X^{t,x}_\infty) := \limsup_{s\to\infty}\delta(s-t)g(X^{t,x}_s)$; this is in line with  Karatzas and Shreve \cite[Appendix D]{KS-book-98}. 
Given $(t,x)\in \X$, classical optimal stopping concerns if 
there is a $\tau\in \Tc_t$ such that the expected discounted payoff
\begin{equation} \label{objective function}
J(t,x;\tau):=\E^{t,x}\left[\delta(\tau-t)g(X^{}_\tau) \right]
\end{equation} 
can be maximized. 
The associated value function
\begin{equation}\label{value function}
v(t,x) := \sup\limits_{\tau\in \Tc^{}_t} J(t,x; \tau)
\end{equation}
has been widely studied, and the existence of an optimal stopping time is affirmative. The following is a standard result taken from \cite[Appendix D]{KS-book-98} and \cite[Chapter I.2]{Ps-book-06}. 

\begin{Proposition}\label{prop:standard result}
	For any $(t,x)\in\X$, let $\{Z^{t,x}_s\}_{s\ge t}$ be a right-continuous process with 
	\begin{equation}\label{Snell}
	Z^{t,x}_s(\omega) =\esssup_{\tau\in\Tc_s} \E^{s,X^{t,x}_{s}(\omega)}[\delta(\tau^{}-t) g(X^{}_{\tau^{}})]\quad \hbox{a.s.}\quad \forall s\ge t,
	\end{equation}
	and define $\widetilde\tau_{t,x}\in\Tc^{}_t$ by 
	\begin{align}
	\widetilde{\tau}_{t,x} &:= \inf \left\{ s\ge t ~:~ \delta(s-t)g(X^{t,x}_s) = Z^{t,x}_s\right\}.\label{naive}
	\end{align}
	Then, $\widetilde{\tau}_{t,x}$ is an optimal stopping time of \eqref{value function}, i.e.
	\begin{equation}\label{J=esssup J}
	J(t,x; \widetilde{\tau}_{t,x}) = \sup\limits_{\tau\in \Tc^{}_{t}} J(t,x; \tau) .
	\end{equation}
	Moreover, $\widetilde{\tau}_{t,x}$ is the smallest, if not unique, optimal stopping time.
\end{Proposition}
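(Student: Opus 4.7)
The plan is to apply the classical Snell envelope machinery to the discounted reward process $Y^{t,x}_s := \delta(s-t) g(X^{t,x}_s)$ for $s \ge t$. First I would note that, since $\delta$ and $g$ are continuous and $X^{t,x}$ has continuous paths, $Y^{t,x}$ is a continuous nonnegative adapted process. The integrability assumption \eqref{dominated} says precisely that the dominating random variable $\sup_{s\ge t} Y^{t,x}_s$ is integrable, so the family $\{Y^{t,x}_\tau:\tau\in\Tc_t\}$ is uniformly integrable. This justifies that the essential supremum in \eqref{Snell} is a.s.\ finite and that one may construct a right-continuous version of the Snell envelope of $Y^{t,x}$.

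Next, I would invoke standard Snell envelope theory to obtain a right-continuous supermartingale $Z^{t,x}$ satisfying \eqref{Snell}, namely the smallest right-continuous supermartingale dominating $Y^{t,x}$. The equivalence between the essential supremum formulation in \eqref{Snell} and the smallest-dominating-supermartingale characterization is classical and relies on the lattice property of the family of conditional expected payoffs. Given this $Z^{t,x}$, the random time $\widetilde{\tau}(t,x)$ in \eqref{naive} is the first entry of the right-continuous process $Z^{t,x} - Y^{t,x}$ into the closed set $\{0\}$, hence an $\F^t$-stopping time with values in $[t,\infty]$.

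The core optimality argument is then to show that the stopped process $\{Z^{t,x}_{s\wedge \widetilde{\tau}(t,x)}\}_{s\ge t}$ is a martingale. Intuitively, before $\widetilde{\tau}(t,x)$ the supermartingale $Z^{t,x}$ strictly dominates $Y^{t,x}$, so any strict supermartingale decrease would contradict minimality of $Z^{t,x}$ among right-continuous supermartingales dominating $Y^{t,x}$; a careful version of this argument (as in \citet[Appendix D]{KS-book-98}) yields the martingale property up to $\widetilde{\tau}(t,x)$. Applying optional sampling, $Z^{t,x}_t = \E[Z^{t,x}_{\widetilde{\tau}(t,x)}\mid\Fc_t] = \E[Y^{t,x}_{\widetilde{\tau}(t,x)}\mid\Fc_t]$ a.s., where the second equality uses $Z^{t,x}_{\widetilde{\tau}(t,x)} = Y^{t,x}_{\widetilde{\tau}(t,x)}$ by right-continuity and the definition of $\widetilde{\tau}(t,x)$. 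Taking expectations gives $J(t,x;\widetilde{\tau}(t,x)) = Z^{t,x}_t = v(t,x)$, establishing \eqref{J=esssup J}.

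Finally, for minimality, suppose $\tau^*\in\Tc_t$ is any optimal stopping time. On the event $\{\tau^* < \widetilde{\tau}(t,x)\}$, the definition of $\widetilde{\tau}(t,x)$ forces $Z^{t,x}_{\tau^*} > Y^{t,x}_{\tau^*}$, so by the supermartingale property $\E[Y^{t,x}_{\tau^*}] < \E[Z^{t,x}_{\tau^*}] \le Z^{t,x}_t = v(t,x)$ whenever this event has positive probability, contradicting optimality of $\tau^*$. Hence $\widetilde{\tau}(t,x) \le \tau^*$ a.s. The main technical obstacle in the whole argument is the regularity needed to go from the abstract essential supremum in \eqref{Snell} to a right-continuous version $Z^{t,x}$ that actually hits $Y^{t,x}$ at $\widetilde{\tau}(t,x)$; this is where one really leans on the continuity of $Y^{t,x}$ and the uniform-integrability consequences of \eqref{dominated}, and where references to \citet[Appendix D]{KS-book-98} and \citet[Chapter I.2]{Ps-book-06} supply the detailed measurable-selection and regularization steps.
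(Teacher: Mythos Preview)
Your proposal is correct and follows the standard Snell envelope argument. The paper itself does not supply a proof of this proposition at all: it simply introduces it with ``The following is a standard result taken from \citet[Appendix D]{KS-book-98} and \citet[Chapter I.2]{Ps-book-06}'' and leaves it at that. Your sketch is precisely the argument contained in those references---uniform integrability from \eqref{dominated}, construction of the right-continuous Snell envelope, the martingale property of the stopped envelope, and the minimality via strict domination before $\widetilde\tau(t,x)$---so there is nothing to compare beyond noting that you have filled in what the paper deliberately omits.
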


\begin{Remark}\label{rem:static optimality}
	The classical optimal stopping problem \eqref{value function} is {\it static} in the sense that it involves only the preference of the agent at time $t$. Following the terminology of Definition 1 in Pedersen and Peskir \cite{pedersen2016optimal}, $\widetilde\tau_{t,x}$ in \eqref{naive} is ``statically optimal''.
\end{Remark}

\subsection{Time Inconsistency}
\label{subsec:time inconsistency}

Following Strotz \cite{Stroz1956myopia},
a naive agent solves the classical problem \eqref{value function} repeatedly at every moment as time passes by. That is, given initial $(t,x)\in\X$, the agent solves 
\begin{equation*}
\sup_{\tau\in\Tc_s} J(s,X^{t,x}_s;\tau)\quad \hbox{at every moment}\ s\ge t. 
\end{equation*}
By Proposition~\ref{prop:standard result}, 
the agent at time $s$ intends to employ the
stopping time $\widetilde\tau_{s,X^{t,x}_s}\in\Tc_s$, for all $s\ge t$. 
{This raises the question of} whether optimal stopping times obtained at different moments, 
$\widetilde\tau_{t,x}$ and $\widetilde\tau_{t',X^{t,x}_{t'}}$ with $t'>t$, are consistent with each other.

\begin{Definition} [Time Consistency]
	\label{def:time consistency}
	The problem \eqref{value function} is time-consistent if for any $(t,x)\in\X$ and $s>t$, 
	$
	\widetilde{\tau}_{t,x} (\omega)= \widetilde{\tau}_{s,X^{t,x}_s(\omega)}(\omega)\ \ \hbox{for a.e. $\omega\in\{\widetilde\tau_{t,x}\ge s\}$}.
	$
	We say the problem \eqref{value function} is time-inconsistent if the above does not hold.
\end{Definition}

In the classical literature of Mathematical Finance, the discount function usually takes the form $\delta(s) = e^{-\rho s}$ for some $\rho \ge 0$. This already guarantees time consistency of \eqref{value function}. To see this, first observe the identity
\begin{equation}\label{identity}
\delta(s)\delta(t) =\delta(s+t)\quad \forall s,t\ge 0.
\end{equation}
Fix $(t,x)\in\X$ and pick $t'>t$ such that $\P[\widetilde\tau_{t,x}\ge t']>0$. For a.e. $\omega\in\{\widetilde\tau_{t,x}\ge t'\}$, set $y:=X^{t,x}_{t'}(\omega)$. We observe from \eqref{naive}, \eqref{Snell}, and $X^{t,x}_s(\omega) = X^{t',y}_s(\omega)$ that
\begin{align*}
\widetilde\tau_{t,x}(\omega) &= \inf \left\{ s\ge t' : \delta(s-t)g(X^{t',y}_s(\omega)) \ge \esssup_{\tau\in \Tc_s}\E^{s,X^{t',y}_s(\omega)}[\delta(\tau-t)g(X^{}_\tau)]\right\},\\
\widetilde\tau_{t',y}(\omega)  &= \inf \left\{ s\ge t' : \delta(s-t')g(X^{t',y}_s(\omega)) \ge \esssup_{\tau\in \Tc_s}\E^{s,X^{t',y}_s(\omega)}[\delta(\tau-t')g(X^{}_\tau)]\right\}.
\end{align*}
Then \eqref{identity} guarantees $\widetilde\tau_{t,x}(\omega)= \widetilde\tau_{t',y}(\omega)$, as $\frac{\delta(\tau-t)}{\delta(s-t)}=\frac{\delta(\tau-t')}{\delta(s-t')}=\delta(\tau-s)$. 
For non-exponential discount functions, the identity \eqref{identity} no longer holds, and the problem \eqref{value function} is in general time-inconsistent.

\begin{Example}[Smoking Cessation] \label{eg:smoking} 
	Suppose a smoker has a fixed lifetime $T>0$. Consider a deterministic cost process $X_s := x_0 e^{\frac12 s}$, $s\in[0,T]$, for some $x_0>0$. Thus, we have $X^{t,x}_s = x e^{\frac12 (s-t)}$ for $s\in[t,T]$. 
	The smoker can (i) quit smoking at some time $s<T$ (with cost $X_s$) and die peacefully at time $T$ (with no cost), or (ii) never quit smoking (thus incurring no cost) but die painfully at time $T$ (with cost $X_T$). With hyperbolic discount function $\delta(s):=\frac{1}{1+s}$ for $s\ge 0$,  \eqref{value function} becomes  minimizing cost 
	\[
	\inf_{s\in[t,T]} \delta(s-t)X^{t,x}_{s} = \inf_{s\in[t,T]} \frac{x e^{\frac{1}{2}(s-t)}}{1+(s-t)}.
	\]
	By basic Calculus, the optimal stopping time $\widetilde{\tau}_{t,x}$ is given by
	\begin{equation}\label{ttau smoking}
	\widetilde{\tau}_{t,x} = \begin{cases} 
	t+1\quad &\hbox{if}\ t<T-1,\\
	T\quad &\hbox{if}\ t\ge T-1.
	\end{cases}
	\end{equation}
	Time inconsistency can be easily observed, and it illustrates the procrastination behavior: the smoker never quits smoking.
\end{Example}

\begin{Example}[Real Options Model]\label{eg:stopping BES(1)}
	Suppose $d=1$ and $X_s := |W_s|$, $s\ge 0$. Consider the payoff function $g(x):=x$ for $x\in\R_+$ and the hyperbolic discount function $\delta(s):=\frac{1}{1+s}$ for $s\ge 0$. The problem \eqref{value function} reduces to
	$
	v(x)=\sup_{\tau\in\Tc} \E^{x}\left[\frac{X_\tau}{1+\tau}\right].
	$
	This can be viewed as a real options problem in which the management of a large non-profitable insurance company has the intention to liquidate or sell the company, and would like to decide when to do so; see the explanations under \eqref{eq:objective function example} for details.
	
	By the argument in Pedersen and Peskir \cite{pederson2000solving}, we prove in Proposition~\ref{prop:ttau explicit} below that the optimal stopping time $\widetilde\tau_{x}$, defined in \eqref{naive} with $t=0$, has the formula
	$$\widetilde\tau_{x} = \inf\left\{s\ge 0 : X^{x}_s\ge \sqrt{1+s}\right\}.$$
	If one solves the same problem at time $t>0$ with $X_t=x\in\R_+$, the optimal stopping time is $\widetilde
	\tau_{t,x}=t+\widetilde\tau_{x} = \inf\{s\ge t : X^{t,x}_s\ge \sqrt{1+(s-t)}\}.$ The free boundary $s\mapsto \sqrt{1+(s-t)}$ is unusual in its dependence on initial time $t$. 
	From Figure~\ref{fig:free boundary}, we clearly observe time inconsistency: $\widetilde\tau_{t,x}(\omega)$ and $\widetilde\tau_{t',X^{t,x}_{t'}}(\omega)$ do not agree in general, for any $t'>t$, as they correspond to different free boundaries.
	\begin{figure}[h!]
		\centering
		\begin{tikzpicture}[scale=0.9]
		\begin{axis}[legend pos=south east, xlabel={$s$}, xmin=0,	ymin=0,
		extra x ticks={3,5},
		extra tick style={grid=major}]
		\addplot[color=blue, domain=0:15] (\x,{(1+\x)^(1/2)});
		\addlegendentry{$t=0$}
		\addplot[color=red, domain=3:15] (\x,{(1+\x-3)^(1/2)});
		\addlegendentry{$t=3$}
		\addplot[color=green, domain=5:15] (\x,{(1+\x-5)^(1/2)});
		\addlegendentry{$t=5$}
		\end{axis}
		\end{tikzpicture}
		\caption{The free boundary $s\mapsto \sqrt{1+(s-t)}$ with different initial times $t$.}
		\label{fig:free boundary}
	\end{figure}
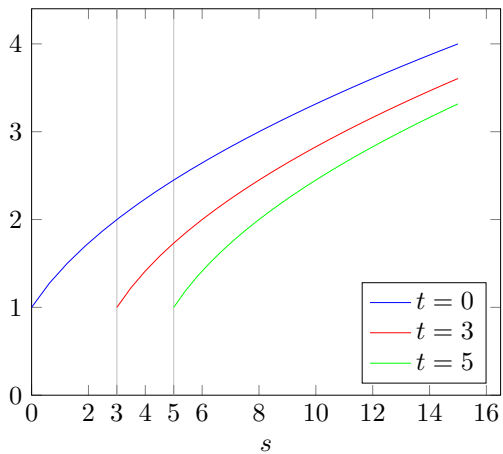
\end{Example}

As proposed in Strotz \cite{Stroz1956myopia}, to deal with time inconsistency, we need a strategy that is either {\it pre-committed} or {\it sophisticated}. A pre-committed agent finds $\widetilde\tau_{t,x}$ in \eqref{naive} at time $t$, and forces her future selves to follow $\widetilde\tau_{t,x}$ through a commitment mechanism (e.g. a contract). By contrast, a sophisticated agent works on ``consistent planning'': she anticipates the change of future preferences, and aims to find a stopping strategy that once being enforced, none of her future selves would want to deviate from it. How to precisely formulate sophisticated stopping strategies has been a challenge in continuous time, and the next section focuses on resolving this.


\section{Equilibrium Stopping Policies}\label{sec:equilibrium}

\subsection{Objective of a Sophisticated Agent}
\label{subsec:sophisticated objective}

Since one may re-evaluate and change her choice of stopping times over time, her stopping strategy is not a single stopping time, but a {\it stopping policy} defined below.

\begin{Definition}\label{def:stopping policy}
	A Borel measurable function $\tau: \X \mapsto \{0,1\}$ is called a \it{stopping policy}. We denote by $\Tc(\X)$ the set of all stopping policies.
\end{Definition}

Given current time and state $(t,x)\in\X$, a  policy $\tau\in \Tc(\X)$ governs when an agent stops: the agent stops at the first time $\tau(s,X^{t,x}_s)$ yields the value $0$, i.e. at the moment
\begin{equation}\label{L}
\begin{split}
\Lc \tau(t,x) :=\ &\inf \left\{ s\ge t ~:~ \tau(s,X^{t,x}_s)= 0 \right\}.
\end{split}
\end{equation}
To show that $\Lc \tau(t,x)$ is a well-defined stopping time, we introduce the set 
\begin{equation}\label{kernel}
\ker(\tau) := \{(t,x)\in \X : \tau(t,x) = 0\}.
\end{equation}
It is called the {\it kernel} of $\tau$, which is the collection of $(t,x)$ at which the policy $\tau$ suggests immediate stopping. Then, $\Lc\tau(t,x)$ can be expressed as
\begin{equation}\label{L'}
\Lc\tau(t,x) = \inf \left\{ s\ge t ~:~ (s,X^{t,x}_s)\in \ker(\tau)\right\}.
\end{equation}

\begin{Lemma}\label{lem:Ltau measurable} 
	For any $\tau\in\Tc(\X)$ and $(t,x)\in\X$, $\ker(\tau)\in\Bc(\X)$ and $\Lc\tau(t,x)\in \Tc_t$.
\end{Lemma}

\begin{proof}
	The Borel measurability of $\tau\in\Tc(\X)$ immediately implies $\ker(\tau)\in\Bc(\X)$. In view of \eqref{L'},
	$\Lc\tau(t,x)(\omega)= \inf \left\{ s\ge t ~:~ (s,\omega)\in E\right\}$, 
	where
	\[
	E:= \{(r,\omega)\in[t,\infty)\times\Omega : (r,X^{t,x}_r(\omega))\in \ker(\tau)\}.
	\]
	With $\ker(\tau)\in\Bc(\X)$ and the process $X^{t,x}$ being progressively measurable, $E$ is a progressively measurable set. Since the filtration $\F^t$ satisfies the usual conditions, \cite[Theorem 2.1]{Bass10} asserts that $\Lc\tau(t,x)$ is an $\F^t$-stopping time.   
	\qed\end{proof}

\begin{Remark}[Naive Stopping Policy]
	Recall the optimal stopping time $\widetilde\tau_{t,x}$ defined in \eqref{naive} for all $(t,x)\in\X$. Define $\widetilde\tau\in\Tc(\X)$ by 
	\begin{equation}\label{naive policy}
	\widetilde\tau(t,x):=
	\begin{cases}
	0,\quad \hbox{if}\ \widetilde\tau_{t,x} = t,\\
	1,\quad \hbox{if}\ \widetilde\tau_{t,x} >t.
	\end{cases}
	\end{equation}
	Note that $\widetilde\tau:\X\mapsto\{0,1\}$ is indeed Borel measurable because $\widetilde{\tau}_{t,x} =t$ if and only if
	\[
	(t,x)\in \left\{(t,x)\in\X : g(x) = \sup_{\tau\in\Tc_t} \E^{t,x}[\delta(\tau-t) g(X^{}_\tau)] \right\} \in \Bc(\X).
	\]
	Following the standard terminology (see e.g. \cite{Stroz1956myopia}, \cite{pollak1968consistent}), we call $\widetilde\tau$ the naive stopping policy as it describes the behavior of a naive agent, discussed in Subsection~\ref{subsec:time inconsistency}. 
\end{Remark}

\begin{Remark}\label{rem:dynamic optimality}
	Despite its name, the naive stopping policy $\widetilde\tau$ may readily satisfy certain optimality criterion. For example, ``dynamic optimality'' recently proposed in Pedersen and Peskir \cite{pedersen2016optimal} can be formulated in our case as follows: $\tau\in\Tc(\X)$ is dynamically optimal if there is no other $\pi\in\Tc(\X)$ such that 
	\[
	\P^{t,x}\left[J\left(\Lc\tau(t,x),X^{t,x}_{\Lc\tau(t,x)};\Lc\pi\left(\Lc\tau(t,x),X^{t,x}_{\Lc\tau(t,x)}\right)\right)>g(X^{t,x}_{\Lc\tau(t,x)})\right]>0
	\]
	for some $(t,x)\in\X$. By \eqref{naive policy} and Proposition~\ref{prop:standard result}, $\widetilde\tau$ is dynamically optimal as the above probability is always $0$. 
\end{Remark}

\begin{Example}[Real Options Model, Continued]
	\label{ex:real option 2}
	Recall the setting of Example \ref{eg:stopping BES(1)}. A naive agent follows $\widetilde\tau\in\Tc(\X)$, and the actual moment of stopping  is
	$$ 
	\Lc \widetilde\tau(t,x) = \inf\{s\ge t : \widetilde\tau(s,X^{t,x}_s) = 0\} = \inf\{s\ge t : X^{t,x}_s\ge 1\},
	$$
	which differs from the agent's original decision $\widetilde\tau_{t,x}$ in Example \ref{eg:stopping BES(1)}. \end{Example}

We can now introduce equilibrium policies. Suppose that a stopping policy $\tau\in \Tc(\X)$ is given to a sophisticated agent. At any $(t,x)\in\X$, the agent carries out the game-theoretic reasoning: ``assuming that all my future selves will follow $\tau\in\Tc(\X)$, what is the best stopping strategy at current time $t$ in response to that?'' Note that the agent at time $t$ has only two possible actions: stopping and continuation. If she stops at time $t$, she gets $g(x)$ immediately. If she continues at time $t$, 
given that all her future selves will follow $\tau\in\Tc(\X)$, she will eventually stop at the moment 
\begin{equation}\label{L*}
\begin{split}
\Lc^* \tau(t,x) :=\ &\inf \left\{ s> t ~:~ \tau(s,X^{t,x}_s) = 0 \right\}\\
=\ &\inf \left\{ s> t ~:~ (s,X^{t,x}_s)\in \ker(\tau)\right\},
\end{split}
\end{equation}
leading to the payoff
\[
J(t,x;\Lc^* \tau(t,x)) = \E^{t,x}\left[\delta(\Lc^* \tau(t,x)-t)g(X_{\Lc^* \tau(t,x)})\right].
\]
By the same argument in Lemma~\ref{lem:Ltau measurable}, $\Lc^* \tau(t,x)$ is a well-defined stopping time in $\Tc_t$. Note the subtle difference between $\Lc \tau(t,x)$ and $\Lc^* \tau(t,x)$: with the latter, the agent at time $t$ simply chooses to continue, with no regard to what $\tau\in\Tc(\X)$ suggests at time $t$. This is why we have ``$s>t$'' in \eqref{L*}, instead of ``$s\ge t$'' in \eqref{L}. 

Now, we separate the space $\X$ into three distinct regions
\begin{equation}\label{regions}
\begin{split}
S_{\tau}&:=\{(t,x)\in \X~:~g(x)>  J(t,x; \Lc^*\tau(t,x)) \},\\
C_{\tau}&:=\{(t,x)\in \X~:~g(x)<  J(t,x; \Lc^*\tau(t,x)) \},\\
I_{\tau}&:=\{(t,x)\in \X~:~g(x)=  J(t,x; \Lc^*\tau(t,x)) \}.
\end{split}
\end{equation}
Some conclusions can be drawn: 
\begin{itemize}
	\item [1.] If $(t,x)\in S_\tau$, the agent should stop immediately at time $t$. 
	\item [2.] If $(t,x)\in C_{\tau}$, the agent should continue at time $t$. 
	\item [3.] If $(t,x)\in I_{\tau}$, the agent is indifferent between stopping and continuation at current time; there is then no incentive for the agent to deviate from the originally assigned stopping strategy $\tau(t,x)$.
\end{itemize}
To summarize, for any $(t,x)\in\X$, the best stopping strategy at current time (in response to future selves following $\tau\in\Tc(\X)$) is 
\begin{equation}\label{Theta}
\Theta \tau(t,x) := \begin{cases}
0 & \text{ for }(t,x)\in S_\tau\\
1 & \text{ for }(t,x)\in C_\tau\\
\tau(t,x) & \text{ for }(t,x)\in I_\tau.
\end{cases}.
\end{equation}
The next result shows that $\Theta\tau:\X\mapsto\{0,1\}$ is again a stopping policy.

\begin{Lemma}\label{lem:Thetatau measurable}
	For any $\tau\in\Tc(\X)$, $S_\tau$, $C_\tau$, and $I_\tau$ belong to $\Bc(\X)$, and $\Theta\tau\in\Tc(\X)$. 
\end{Lemma}

\begin{proof}
	Since $\Lc^*\tau(t,x)$ is the first hitting time to the Borel set $\ker(\tau)$, the map $(t,x)\mapsto J(t,x;\Lc^*\tau(t,x)) = \E^{t,x}[\delta(\Lc^*\tau(t,x)-t) g(X^{}_{\Lc^*\tau(t,x)})]$ is Borel measurable, and thus $S_\tau$, $I_\tau$, and $C_\tau$ all belong to $\Bc(\X)$. Now, by \eqref{Theta},
	$
	\ker(\Theta\tau) = S_\tau \cup (I_\tau\cap\ker(\tau))\in\Bc(\X),
	$
	which implies that $\Theta\tau\in\Tc(\X)$. 
	\qed\end{proof}

By Lemma \ref{lem:Thetatau measurable}, $\Theta$ can be viewed as an operator acting on the space $\Tc(\X)$. For any initial $\tau\in \Tc(\X)$, $\Theta: \Tc(\X)\mapsto \Tc(\X)$ generates
a new policy $\Theta\tau\in \Tc(\X)$. The switch from $\tau$ to $\Theta\tau$ corresponds to an additional level of  strategic reasoning in Game Theory, as discussed below Corollary \ref{coro:naive satisfies}.

\begin{Definition} [Equilibrium Stopping Policies] \label{def:equilibrium}
	We say $\tau\in\Tc(\X)$ is an equilibrium stopping policy if $\Theta \tau(t,x) = \tau(t,x)$ for all $(t,x)\in\X$.  
	We denote by $\Ec(\X)$ the collection of all equilibrium stopping policies.
\end{Definition}

The term ``equilibrium'' is used as a connection to subgame-perfect Nash equilibria in an inter-temporal game among current self and future selves. This equilibrium idea was invoked in stochastic control under time inconsistency; see e.g. \cite{EL06}, \cite{Ekeland2008investment}, \cite{Ekeland2012time}, and \cite{Bjork2014theory-discrete}. A contrast with the stochastic control literature needs to be pointed out.

\begin{Remark}[Comparison with Stochastic Control]\label{rem:comparison to control}
	In time-inconsistent stochastic control, local perturbation of strategies on small time intervals $[t,t+\eps]$ is the standard way to define equilibrium controls. In our case, local perturbation is carried out instantaneously at time $t$. This is because an instantaneously-modified stopping strategy may already change the expected discounted payoff significantly, whereas a control perturbed only at time $t$ yields no effect.
\end{Remark}

The first question concerning Definition~\ref{def:equilibrium} is the existence of an equilibrium stopping policy. 
Finding at least one such a policy turns out to be easy.

\begin{Remark} [Trivial Equilibrium]\label{rem:trivial}
	Define $\tau\in\Tc(X)$ by $\tau(t,x):= 0$ for all $(t,x)\in\X$. Then $\Lc\tau(t,x)=\Lc^*\tau(t,x)=t$, and thus $J(t,x;\Lc^*\tau(t,x)) = g(x)$ for all $(t,x)\in\X$. This implies $I_\tau = \X$. We then conclude from \eqref{Theta} that $\Theta\tau(t,x)  = \tau(t,x)$ for all $(t,x)\in\X$, which shows $\tau\in\Ec(\X)$. We call it the trivial equilibrium stopping policy.  
\end{Remark}

\begin{Example}[Smoking Cessation, Continued]\label{eg:smoking continued}
	Recall the setting in Example~\ref{eg:smoking}. 
	Observe from \eqref{ttau smoking} and \eqref{naive policy} that $\Lc^*\widetilde\tau(t,x) = T$ for all $(t,x)\in\X$. Then, 
	\[
	\delta(\Lc^*\widetilde\tau(t,x)-t) X^{t,x}_{\Lc^*\widetilde\tau(t,x)} = \frac{X^{t,x}_T}{1+T-t} = \frac{xe^{\frac{1}{2}(T-t)}}{1+T-t}. 
	\]
	Since $e^{\frac12 s}=1+s$ has two solutions $s=0$ and $s=s^*\approx 2.51286$, and $e^{\frac12 s}> 1+s$ iff $s>s^*$, the above equation implies
	$S_{\widetilde\tau}=\{(t,x):t <T-s^*\}$, $C_{\widetilde\tau} = \{(t,x):t\in(T-s^*,T)\}$, and $I_{\widetilde\tau} = \{(t,x):t = T-s^*\ \hbox{or}\ T\}$. We therefore get 
	\[
	\Theta\widetilde\tau(t,x) = 
	\begin{cases}
	0\quad &\hbox{for}\ t<T-s^*,\\
	1\quad &\hbox{for}\ t\ge T-s^*.
	\end{cases}
	\]
	Whereas a naive smoker delays quitting smoking indefinitely (as in Example~\ref{eg:smoking}), the first level of strategic reasoning (i.e. applying $\Theta$ to $\widetilde\tau$ once) recognizes this procrastination behavior and pushes the smoker to quit immediately, unless he is already too old (i.e. $t\ge T-s^*$). It can be checked that $\Theta\widetilde\tau$ is already an equilibrium, i.e. $\Theta^2\widetilde\tau(t,x)= \Theta\widetilde\tau(t,x)$ for all $(t,x)\in\X$.
\end{Example}

It is worth noting that in the classical case of exponential discounting, characterized by \eqref{identity}, the naive stopping policy $\widetilde\tau$ in \eqref{naive policy} is already an equilibrium. 

\begin{Proposition}\label{prop:ttau in Ec}
	Under \eqref{identity}, $\widetilde\tau\in\Tc(\X)$ defined in \eqref{naive policy} belongs to $\Ec(\X)$.
\end{Proposition}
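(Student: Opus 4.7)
My plan is to leverage the factorization \eqref{identity} inside the Snell-envelope formula \eqref{eq:Snell}. Under exponential discounting, $\delta(\tau-t)=\delta(s-t)\delta(\tau-s)$ for $\tau\ge s\ge t$, so $\delta(s-t)$ pulls out of the essential supremum and one obtains $Z^{t,x}_s=\delta(s-t)\,v(s,X^{t,x}_s)$ a.s. Since the identity together with $\delta(0)=1$ and continuity forces $\delta(s)=e^{-\rho s}$ for some $\rho\ge 0$, we have $\delta(s-t)>0$ on every finite interval, so \eqref{naive} simplifies to the time-homogeneous representation $\widetilde\tau(t,x)(\omega)=\inf\{s\ge t:g(X^{t,x}_s(\omega))=v(s,X^{t,x}_s(\omega))\}$ a.s. In particular, $\widetilde\tau$ is a first-hitting time of the set $A=\{(t,x)\in\X:g(x)=v(t,x)\}$ by the time-space process $(s,X^{t,x}_s)$.

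Applying this representation at a later initial time $s$ shows that $\widetilde\tau(s,y)(\omega)=s$ iff $g(y)=v(s,y)$, i.e., iff $(s,y)\in A$. Plugging this into \eqref{L} and \eqref{L*} yields $\Lc\widetilde\tau(t,x)(\omega)=\inf\{s\ge t:(s,X^{t,x}_s(\omega))\in A\}=\widetilde\tau(t,x)(\omega)$ a.s., while $\Lc^*\widetilde\tau(t,x)$ is the same infimum taken over $s>t$.

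I would then split on whether $(t,x)\in A$. If $(t,x)\in A$, then $\widetilde\tau(t,x)=t$ a.s., and by optimality \eqref{J=esssup J}, $g(x)=v(t,x)\ge J(t,x;\Lc^*\widetilde\tau(t,x))$, so $(t,x)\in S_{\widetilde\tau}\cup I_{\widetilde\tau}$; in either case \eqref{Theta} yields $\Theta\widetilde\tau(t,x)=t=\widetilde\tau(t,x)$ (on $I_{\widetilde\tau}$ we use $\Lc\widetilde\tau(t,x)=\widetilde\tau(t,x)=t$). If $(t,x)\notin A$, then right-continuity of $s\mapsto Z^{t,x}_s$ (built into Proposition~\ref{prop:standard result}) combined with $g(x)<v(t,x)$ gives $\widetilde\tau(t,x)>t$ a.s., and hence $\Lc^*\widetilde\tau(t,x)=\widetilde\tau(t,x)$ a.s. Then $J(t,x;\Lc^*\widetilde\tau(t,x))=J(t,x;\widetilde\tau(t,x))=v(t,x)>g(x)$ by \eqref{J=esssup J}, so $(t,x)\in C_{\widetilde\tau}$ and \eqref{Theta} delivers $\Theta\widetilde\tau(t,x)=\Lc^*\widetilde\tau(t,x)=\widetilde\tau(t,x)$ a.s.

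The main technical nuisance is the measurability bookkeeping when evaluating $\widetilde\tau(s,X^{t,x}_s(\omega))(\omega)$ across different initial times, which is absorbed by the shifted-random-variable formalism summarized in \eqref{cond expect}. Beyond this, the argument is a direct case analysis and no substantial obstacle is expected.
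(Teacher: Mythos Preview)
Your proposal is correct and follows essentially the same route as the paper's proof. Both arguments use \eqref{identity} to reduce $\widetilde\tau(t,x)$ to the first hitting time of $A=\{(s,y):g(y)=v(s,y)\}$ by $(s,X^{t,x}_s)$, deduce $\Lc\widetilde\tau=\widetilde\tau$ and (when $\widetilde\tau>t$) $\Lc^*\widetilde\tau=\widetilde\tau$, and then perform the same case split on $S_{\widetilde\tau}\cup I_{\widetilde\tau}$ versus $C_{\widetilde\tau}\cup I_{\widetilde\tau}$; the paper phrases the key identity via the time-consistency computation \eqref{wtau at t}--\eqref{wtau at t'}, whereas you write it as $Z^{t,x}_s=\delta(s-t)v(s,X^{t,x}_s)$, which is the same content.
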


\begin{proof}
	The proof is delegated to Appendix~\ref{subsec:ttau in Ec}.
	\qed\end{proof}

\subsection{The Main Result}
\label{subsec:finding equilibrium}

In this subsection, we look for equilibrium policies through fixed-point iterations. For any $\tau\in\Tc(\X)$, we apply $\Theta$ to $\tau$ repetitively  until we reach an equilibrium policy. 
In short, we define $\tau_0$ by
\begin{equation}\label{tau0}
\tau_0(t,x) := \lim_{n\to\infty} \Theta^n\tau(t,x)\quad \forall (t,x)\in\X,
\end{equation}
and take it as a candidate equilibrium policy. To make this argument rigorous, we need to show (i) the limit in \eqref{tau0} converges, so that $\tau_0$ is well-defined; (ii) $\tau_0$ is indeed an equilibrium policy, i.e. $\Theta\tau_0 = \tau_0$.  
To this end, we impose the condition:

\begin{Assumption}\label{asm:DI}
	The function $\delta$ satisfies 
	$
	\delta(s)\delta(t)\le \delta(s+t)
	$ for all $s, t\ge 0$.
\end{Assumption}
Assumption~\ref{asm:DI} is closely related to {\it decreasing impatience ($DI$)} in Behavioral Economics. It is well-documented in empirical studies, e.g. \cite{Thaler81}, \cite{LT89}, \cite{LP92}, that people admits $DI$: when choosing between two rewards, people are more willing to wait for the larger reward (more patient) when these two rewards are further away in time. For instance, in the two scenarios (i) getting \$100 today or \$110 tomorrow, and (ii) getting \$100 in 100 days or \$110 in 101 days, 
people tend to choose \$100 in (i), but \$110 in (ii). 

Following \cite[Definition 1]{Prelec04} and \cite{noor2009a}, \cite{noor2009b}, $DI$ can be formulated under current context as follows: 
the discount function $\delta$ induces $DI$ if 
\begin{equation}\label{DI}
\hbox{for any } s\ge 0,\quad t\mapsto\frac{\delta(t+s)}{\delta(t)}\quad \hbox{is strictly increasing}. 
\end{equation}
Observe that \eqref{DI} readily implies Assumption~\ref{asm:DI}, as $\delta(t+s)/\delta(t)\ge \delta(s)/\delta(0) = \delta(s)$ for all $s,t\ge 0$. That is, Assumption  \ref{asm:DI} is automatically true under $DI$. Note that Assumption \ref{asm:DI} is more general than $DI$, as it obviously includes the classical case of exponential discounting, characterized by \eqref{identity}. 

The main convergence result for \eqref{tau0} is the following:

\begin{Proposition}\label{thm:main 1}
	Let Assumption~\ref{asm:DI} hold. If $\tau\in\Tc(\X)$ satisfies 
	\begin{equation}\label{1st iteration decreasing}
	\ker(\tau)\subseteq \ker(\Theta\tau),
	\end{equation}
	then 
	\begin{equation}\label{ker increasing}
	\ker(\Theta^n \tau)\subseteq  \ker(\Theta^{n+1} \tau),\quad \forall n\in\N.
	\end{equation}
	Hence, $\tau_0$ in \eqref{tau0} is a well-defined element in $\Tc(\X)$, with $\ker(\tau_0)=\bigcup_{n\in\N}\ker(\Theta^{n}\tau)$.
\end{Proposition}

\begin{proof}
	The proof is delegated to Appendix~\ref{subsec:LTheta^n decreasing}.
	\qed\end{proof}

Condition \eqref{1st iteration decreasing} means that at any $(t,x)\in\X$ where the initial policy $\tau$ indicates immediate stopping, the new policy $\Theta\tau$ agrees with it; however, it is possible that at some $(t,x)\in\X$ where $\tau$ indicates continuation, $\Theta\tau$ suggests immediate stopping, based on the game-theoretic reasoning in Subsection~\ref{subsec:sophisticated objective}.
Note that \eqref{1st iteration decreasing} is not very restrictive, as it already covers all hitting times to subsets of $\X$ that are open (or more generally, half-open in $[0,\infty)$ and open in $\R^d$), as explained below.

\begin{Remark} 
	Let $E$ be a subset of $\X$ that is ``open'' in the sense that for any $(t,x)\in E$, there exists $\eps>0$ such that $(t,x)\in[t, t+\eps)\times B_\eps(x)\subseteq E$, where $B_\eps(x):=\{y\in\R^d : |y-x|<\eps\}$. Define $\tau\in\Tc(\X)$ by $\tau(t,x)=0$ if and only if $(t,x)\in E$. 
	Since $\ker(\tau) = E$ is ``open'', for any $(t,x)\in\ker(\tau)$, we have $\Lc^*\tau(t,x) = t$, which implies $(t,x)\in I_\tau$. Thus, $\ker(\tau)\subseteq I_\tau$. It follows that \eqref{1st iteration decreasing} holds, as $\ker(\tau) \subseteq S_\tau\cup\ker(\tau) = S_\tau\cup (I_\tau\cap\ker(\tau)) = \ker(\Theta\tau)$, where the last equality is due to \eqref{Theta}.
	
	The stopping policy $\tau$ corresponds to the stopping times $T_{t,x}:=\inf\{s\ge t : (s,X_s^{t,x})\in E\}$ for all $(t,x)\in\X$. In particular, if $E = [0,\infty)\times F$ where $F$ is an open set in $\R^d$, the corresponding stopping times are $T'_{t,x}:=\inf\{s\ge t : X_s^{t,x}\in F\}$, $(t,x)\in\X$.
\end{Remark}

Moreover, the naive stopping policy $\widetilde\tau$ also satisfies \eqref{1st iteration decreasing}. 

\begin{Proposition}\label{prop:naive satisfies}
	$\widetilde\tau\in\Tc(\X)$ defined in \eqref{naive policy} satisfies \eqref{1st iteration decreasing}. 
\end{Proposition}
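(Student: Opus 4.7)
The plan is to prove, pathwise in $\omega$, the set inclusion
\[
A(\omega) := \{s > t : \widetilde\tau(s,X^{t,x}_s(\omega))(\omega) = s\} \subseteq B(\omega) := \{s > t : \Theta\widetilde\tau(s,X^{t,x}_s(\omega))(\omega) = s\},
\]
for each fixed $(t,x)\in\X$. Once this is in hand, \eqref{1st iteration decreasing} follows immediately from the definition \eqref{L*}, since $\inf B(\omega) \le \inf A(\omega)$ whenever $A(\omega) \subseteq B(\omega)$.

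To verify the inclusion, I would rely on the characterization $\ker(\widetilde\tau) = A\times\Omega$ recorded in the Remark on the naive stopping policy above, where $A = \{(s,y)\in\X : g(y) = v(s,y)\}$. The crucial feature is that this kernel is $\omega$-independent, so $s\in A(\omega)$ reduces to the deterministic condition $(s, X^{t,x}_s(\omega))\in A$. Writing $y := X^{t,x}_s(\omega)$ under this condition, we have $g(y) = v(s,y) = \sup_{\tau\in\Tc_s} J(s,y;\tau) \ge J(s,y;\Lc^*\widetilde\tau(s,y))$, because $\Lc^*\widetilde\tau(s,y)\in \Tc_s$. By \eqref{regions} this forces $(s,y)\in S_{\widetilde\tau}\cup I_{\widetilde\tau}$.

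A short case analysis through \eqref{Theta} then completes the argument. On $S_{\widetilde\tau}$, $\Theta\widetilde\tau(s,y)\equiv s$ by definition, so $s\in B(\omega)$. On $I_{\widetilde\tau}$, $\Theta\widetilde\tau(s,y)(\omega) = \Lc\widetilde\tau(s,y)(\omega)$; since $X^{s,y}_s = y$ and the kernel identity gives $\widetilde\tau(s,y)(\omega) = s$ whenever $(s,y)\in A$, the index $r = s$ already belongs to the defining set in \eqref{L}, so $\Lc\widetilde\tau(s,y)(\omega) = s$ and once again $s\in B(\omega)$.

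The only delicacy I anticipate is preserving this reasoning uniformly across all $s > t$: the kernel identity $\ker(\widetilde\tau) = A\times\Omega$ is precisely what removes the need to juggle an uncountable family of $\P$-null sets, one per $(s,y)$, since $\{\widetilde\tau(s,y) = s\}$ collapses to a deterministic function of $(s,y)$. Note also that Assumption~\ref{asm:DI} is not invoked here; the proof rests only on $\widetilde\tau$ being the smallest optimal stopping time of the classical problem \eqref{value function} together with the definitions of $\Theta$, $\Lc$ and $\Lc^*$.
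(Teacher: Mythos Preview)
Your proof is correct and hinges on exactly the implication the paper establishes: whenever $\widetilde\tau(s,y)=s$ one has $g(y)=v(s,y)\ge J(s,y;\Lc^*\widetilde\tau(s,y))$, so $(s,y)\in S_{\widetilde\tau}\cup I_{\widetilde\tau}$, and together with $\Lc\widetilde\tau(s,y)=s$ this forces $\Theta\widetilde\tau(s,y)=s$. The paper's Appendix~\ref{subsec:naive satisfies} applies this implication at the single point $t_0:=\Lc^*\widetilde\tau(t,x)$ when the infimum in \eqref{L*} is attained, or along a sequence $t_n\downarrow t_0$ when it is not, splitting into three cases; your set-inclusion packaging $A(\omega)\subseteq B(\omega)$ is a tidier way to deliver the same content, since $\inf B(\omega)\le\inf A(\omega)$ follows directly with no case distinction on attainment, and your use of the $\omega$-independence of $\ker(\widetilde\tau)$ (from the Remark on the naive stopping policy) is precisely what the paper's appeal to the Snell envelope identity $Z^{s,y}_s=v(s,y)$ achieves pointwise.
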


\begin{proof}
	The proof is delegated to Appendix~\ref{subsec:naive satisfies}.
	\qed\end{proof}

The next theorem is the main result of our paper. It shows that the fixed-point iteration in \eqref{tau0} indeed converges to an equilibrium policy. 

\begin{Theorem}\label{thm:main 2}
	Let Assumption~\ref{asm:DI} hold. If $\tau\in\Tc(\X)$ satisfies \eqref{1st iteration decreasing}, then 
	$\tau_0$ defined in \eqref{tau0} belongs to $\Ec(\X)$.
\end{Theorem}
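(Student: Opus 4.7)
The plan is to write $\tau_n := \Theta^n\tau$, so that by Proposition~\ref{thm:main 1} the sequence $\tau_n$ is decreasing and $\tau_0 = \lim_n \tau_n \in \Tc(\X)$. The goal is to pass to the limit $n\to\infty$ in the recursion $\tau_{n+1}=\Theta\tau_n$ and conclude $\Theta\tau_0(t,x)=\tau_0(t,x)$ a.s.\ for every $(t,x)\in\X$.

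\textbf{Step 1 (Convergence of the induced hitting times).} Since $\tau_n\downarrow\tau_0$, the kernels $\ker(\tau_n)$ form an increasing sequence, so $\Lc\tau_n(t,x)$ and $\Lc^*\tau_n(t,x)$ are both non-increasing in $n$. Denote their limits by $\ell(t,x)$ and $\ell^*(t,x)$; inclusion of kernels immediately gives $\ell\ge \Lc\tau_0$ and $\ell^*\ge\Lc^*\tau_0$. The key lemma to establish is that these inequalities are in fact equalities a.s. For $\ell^*$, for any $\eps>0$ one picks some $s\in(t,\Lc^*\tau_0(t,x)+\eps)$ with $(s,X^{t,x}_s)\in\ker(\tau_0)$ up to a null set, and uses the representation $\tau_{n+1}(s,y)\in\{s,\Lc\tau_n(s,y),\Lc^*\tau_n(s,y)\}$ together with the iteration structure to exhibit, for every $n$ large, a nearby $s'$ with $(s',X^{t,x}_{s'})\in\ker(\tau_n)$. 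The path continuity of $X^{t,x}$ and the measurability of the kernels is what makes the argument go through. The same reasoning, with $s\ge t$ in place of $s>t$, handles $\ell$.

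\textbf{Step 2 (Continuity of the expected payoff).} Using continuity of $\delta$, $g$ and of $s\mapsto X^{t,x}_s$, together with the integrability condition \eqref{dominated}, dominated convergence yields
\begin{equation*}
J\bigl(t,x;\Lc^*\tau_n(t,x)\bigr) \;\longrightarrow\; J\bigl(t,x;\Lc^*\tau_0(t,x)\bigr).
\end{equation*}
This is the only analytic input needed to transfer the strict inequalities defining $S_\tau$ and $C_\tau$ in \eqref{regions} from the limit to the approximating sequence.

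\textbf{Step 3 (Case analysis along the limiting regions).} Fix $(t,x)\in\X$ and consider the three regions of \eqref{regions} applied to $\tau_0$. If $(t,x)\in S_{\tau_0}$, then by Step 2 the strict inequality $g(x)>J(t,x;\Lc^*\tau_n(t,x))$ holds for all $n$ large, so $(t,x)\in S_{\tau_n}$ and $\tau_{n+1}(t,x)=t$; passing to the limit gives $\tau_0(t,x)=t=\Theta\tau_0(t,x)$. The case $(t,x)\in C_{\tau_0}$ is symmetric: eventually $(t,x)\in C_{\tau_n}$, so $\tau_{n+1}(t,x)=\Lc^*\tau_n(t,x)\to\Lc^*\tau_0(t,x)$ by Step 1, hence $\tau_0(t,x)=\Lc^*\tau_0(t,x)=\Theta\tau_0(t,x)$. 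The indifference region $(t,x)\in I_{\tau_0}$ is the delicate one: one extracts a subsequence along which $(t,x)$ belongs to exactly one of $S_{\tau_{n_k}},I_{\tau_{n_k}},C_{\tau_{n_k}}$. In the first sub-case one gets $\tau_0(t,x)=t$, and then $\Lc\tau_0(t,x)=t$ because $(t,x)\in\ker(\tau_0)$; in the second, $\tau_{n_k+1}(t,x)=\Lc\tau_{n_k}(t,x)\to\Lc\tau_0(t,x)$ directly; in the third, $\tau_0(t,x)=\Lc^*\tau_0(t,x)$, and one argues that the indifference $g(x)=J(t,x;\Lc^*\tau_0(t,x))$ forces $\Lc\tau_0(t,x)=\Lc^*\tau_0(t,x)$ a.s., so again $\tau_0(t,x)=\Theta\tau_0(t,x)$.

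\textbf{Main obstacle.} The substantive work is Step~1. The inequality $\Lc\tau_n\ge\Lc\tau_0$ is free from inclusion of kernels, but the reverse convergence is not automatic: a generic decreasing sequence of stopping times need not produce hitting times converging to the hitting time of the limit kernel, because $\tau_n(t,x)\downarrow t$ does not a priori force $\tau_n(t,x)=t$ for some $n$. Resolving this requires exploiting the particular structure $\tau_{n+1}(t,x)\in\{t,\Lc\tau_n(t,x),\Lc^*\tau_n(t,x)\}$ of the iteration, Assumption~\ref{ass:technical condition}, and the fact that Assumption~\ref{asm:DI} transports log sub-additivity along the iteration; once Step~1 is secured, Steps 2 and 3 are essentially bookkeeping.
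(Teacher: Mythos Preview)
Your proposal is essentially correct and shares the paper's overall architecture: isolate the convergence $\Lc^*\tau_n\to\Lc^*\tau_0$ (your Step~1 is exactly the paper's Lemma~\ref{lem:technical}), then pass the defining inequalities of \eqref{regions} to the limit via dominated convergence. The differences are organizational rather than substantive. For Step~1, you argue the hard direction by picking $s$ near $\Lc^*\tau_0$ with $(s,X^{t,x}_s)\in\ker(\tau_0)$ and manufacturing a nearby $s'\in\ker(\tau_n)$ from the trichotomy $\tau_{n+1}(s,y)\in\{s,\Lc\tau_n(s,y),\Lc^*\tau_n(s,y)\}$; this works, because if $\tau_{n+1}(s,X_s)(\omega)>s$ it equals a hitting time of $\ker(\tau_n)$, which is forced down to $s$ by $\tau_{n+1}(s,X_s)\downarrow\tau_0(s,X_s)=s$. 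The paper instead proves the reverse implication directly, showing $\tau_0(s,X^{t,x}_s)>s$ for all $s\in(t,\lim_n\Lc^*\tau_n)$ via the identity $\tau_{n+1}(s,X_s)=\Lc^*\tau_n(s,X_s)=\Lc^*\tau_n(t,x)$ on that interval. For Step~3, you split according to the region of $(t,x)$ for $\tau_0$, while the paper splits $\omega$-wise according to whether $\tau_\ell(t,x)(\omega)=t$ for some $\ell$; the paper's split makes the indifference case essentially disappear, whereas yours requires the extra subsequence argument.

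One genuine correction: in your third sub-case under $I_{\tau_0}$ (namely $(t,x)\in C_{\tau_{n_k}}$ along a subsequence), the claim that ``the indifference $g(x)=J(t,x;\Lc^*\tau_0(t,x))$ forces $\Lc\tau_0(t,x)=\Lc^*\tau_0(t,x)$'' is not right---the indifference equality carries no such information. The conclusion is still correct but for a simpler reason: you have already established $\tau_0(t,x)=\Lc^*\tau_0(t,x)$, and then on $\{\tau_0(t,x)>t\}$ Lemma~\ref{rem:L and L*}(a) gives $\Lc\tau_0=\Lc^*\tau_0$, while on $\{\tau_0(t,x)=t\}$ one has $\Lc\tau_0(t,x)=t=\Lc^*\tau_0(t,x)$ trivially. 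With this fix the argument closes.
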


\begin{proof}
	The proof is delegated to Section~\ref{subsec:proof of lem:technical}. 
	\qed\end{proof}

The following result for the naive stopping policy $\widetilde\tau$, defined in \eqref{naive policy}, is a direct consequence of Proposition~\ref{prop:naive satisfies} and Theorem~\ref{thm:main 2}. 

\begin{Corollary}\label{coro:naive satisfies}
	Let Assumption~\ref{asm:DI} hold. The stopping policy $\widetilde\tau_0\in\Tc(\X)$ defined by
	\begin{equation}\label{ttau0}
	\widetilde\tau_0(t,x) := \lim_{n\to\infty} \Theta^n\widetilde\tau(t,x)\quad \forall (t,x)\in\X
	\end{equation}
	belongs to $\Ec(\X)$. 
\end{Corollary}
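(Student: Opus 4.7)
The proof is immediate by combining the two preceding results: the task is simply to verify that the hypotheses of Theorem~\ref{thm:main 2} are met by $\tau := \widetilde\tau$ and then quote that theorem. First I would note that Assumption~\ref{asm:DI} is already imposed in the statement of the corollary. Second, Proposition~\ref{prop:naive satisfies} verifies that the naive stopping policy $\widetilde\tau\in\Tc(\X)$ defined in \eqref{naive} satisfies Assumption~\ref{ass:technical condition}, namely
\[
\Lc^*\Theta\widetilde\tau(t,x) \le \Lc^*\widetilde\tau(t,x)\quad \hbox{a.s.}\quad \forall (t,x)\in\X.
\]
Thus both prerequisites of Theorem~\ref{thm:main 2} are in place when one chooses the initial policy to be $\widetilde\tau$.

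Next I would apply the convergence machinery. Proposition~\ref{thm:main 1}, invoked with $\tau=\widetilde\tau$, guarantees that $\{\Theta^n\widetilde\tau(t,x)(\omega)\}_{n\in\N}$ is pointwise nonincreasing in $n$ for every $(t,x,\omega)\in\X\times\Omega$. Consequently the limit in \eqref{ttau0} exists for every $(t,x,\omega)$, and the limiting object $\widetilde\tau_0$ is a well-defined element of $\Tc(\X)$. Theorem~\ref{thm:main 2}, again applied with $\tau=\widetilde\tau$, then asserts precisely that this limit is a fixed point of $\Theta$, so $\widetilde\tau_0\in\Ec(\X)$ by Definition~\ref{def:equilibrium}.

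There is no substantive obstacle in this corollary, since all the technical work has already been carried out in Propositions~\ref{prop:naive satisfies}, \ref{thm:main 1} and Theorem~\ref{thm:main 2}. The content here is purely conceptual: the naive policy $\widetilde\tau$ is a canonical starting point for the iteration $\Theta^n$, and the corollary records that the iterative procedure, when initialized at $\widetilde\tau$, always terminates in the limit at some equilibrium. This is exactly the ``increasing levels of strategic reasoning'' bridge between the naive and sophisticated agents emphasized in Section~\ref{sec:contributions}.
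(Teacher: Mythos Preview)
Your proposal is correct and follows exactly the paper's own approach: the paper states the corollary as ``a direct consequence of Proposition~\ref{prop:naive satisfies} and Theorem~\ref{thm:main 2}'' without any further argument. Your additional mention of Proposition~\ref{thm:main 1} for the well-definedness of the limit is harmless, as that step is implicitly subsumed in Theorem~\ref{thm:main 2}.
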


Our iterative approach, as in \eqref{tau0}, contributes to the literature of time inconsistency in two ways. First, the standard approach for finding equilibrium strategies in continuous time is solving a system of non-linear equations (the so-called extended HJB equation), as proposed in \cite{Ekeland2008investment} and \cite{BKM16}. Solving this system of equations is difficult; 
and even when it is solved (as in the special cases in \cite{Ekeland2008investment} and \cite{BKM16}), 
we just obtain one particular equilibrium, and it is unclear how other equilibrium strategies can be found. 
Our iterative approach provides a potential remedy here. 
We can find different equilibria simply by starting the iteration \eqref{tau0} with different initial policies $\tau\in\Tc(\X)$. In some cases, we are able to find {\it all} equilibria, and obtain a complete characterization of $\Ec(\X)$; see Proposition~\ref{prop:entire Ec} below. 

Second, while the continuous-time formulation of equilibrium strategies was initiated in \cite{EL06}, the ``origin'' of an equilibrium strategy has not been addressed. This question is important as people do not start with using  equilibrium strategies. People have their own initial strategies, determined by a variety of factors such as classical optimal stopping theory, personal habits, and popular rules of thumb in the market. Once an agent starts to do game-theoretic reasoning and look for equilibrium strategies, she is not satisfied with an arbitrary equilibrium. Instead, she works on improving her initial strategy to turn it into an equilibrium. This improving process is absent from \cite{EL06}, \cite{Ekeland2008investment}, and \cite{BKM16}, but it is in fact well-known in Game Theory as the hierarchy of strategic reasoning in \cite{Stahl93} and \cite{SW94}.
Our iterative approach embodies this 
framework: given an initial $\tau\in\Tc(\X)$, $\Theta^n\tau\in\Tc(\X)$ corresponds to level-$n$ strategic reasoning in \cite{SW94}, and $\tau_0:= \lim_{n\to\infty}\Theta^n\tau$ reflects full rationality of ``smart$_\infty$'' players in \cite{Stahl93}. Hence, our formulation complements the literature of time inconsistency in that it not only defines what an equilibrium is, but explains where an equilibrium is coming from. This in turn provides ``agent-specific'' results: it assigns one specific equilibrium to each agent according to her initial behavior. 

In particular, Corollary~\ref{coro:naive satisfies} 
specifies the connection between the naive behavior and the sophisticated one. While these behaviors have been widely discussed in the literature, their relation has not been stated mathematically as precisely as in \eqref{ttau0}.  



\subsection{The Time-Homogeneous Case}
Suppose the state process $X$ is time-homogeneous, i.e. $X_s=f(W_s)$ for some measurable $f:\R^d\mapsto\R$; or, the coefficients $b$ and $\sigma$ in \eqref{SDE} does not depend on $t$. 
The objective function \eqref{objective function} then reduces to $J(x;\tau) := \E^x[\delta(\tau) g(X^{}_\tau)]$ for $x\in\R^d$ and $\tau\in\Tc$,
where the superscript of $\E^x$ means $X_0=x$. The decision to stop or to continue then depends on the current state $x$ only. The formulation in Subsection~\ref{subsec:sophisticated objective} reduces to:

\begin{Definition}\label{def:homogeneous case}
	When $X$ is time-homogeneous, a Borel measurable $\tau: \R^d\mapsto \{0, 1\}$ is called a stopping policy, and we denote by $\Tc(\R^d)$ the set of all stopping policies. Given $\tau\in\Tc(\R^d)$ and $x\in\R^d$, we define, similarly to \eqref{kernel}, \eqref{L}, and \eqref{L*}, $\ker(\tau):= \{x\in\R^d : \tau(x)=0\}$, $\Lc\tau(x):=\inf\{t\ge 0 : \tau(X^{x}_t)=0\}$, and $\Lc^*\tau(x):=\inf\{t> 0 : \tau(X^{x}_t)=0\}$.
	Furthermore, we say $\tau\in\Tc(\R^d)$ is an equilibrium stopping policy if $\Theta\tau(x) =\tau(x) $ for all $x\in\R^d$, where 
	\begin{equation}\label{Theta'}
	\Theta \tau(x) := \begin{cases}
	0 & \text{ if  }\ x\in S_\tau:= \{x : g(x)> \E^x[\delta(\Lc^*\tau(x)) g(X^{}_{\Lc^*\tau(x)})]\},\\
	1 & \text{ if  }\ x\in C_\tau :=\{x: g(x)< \E^x[\delta(\Lc^*\tau(x)) g(X^{}_{\Lc^*\tau(x)})]\},\\
	\tau(x) & \text{ if }\ x\in I_\tau := \{ x : g(x)= \E^x[\delta(\Lc^*\tau(x)) g(X^{}_{\Lc^*\tau(x)})]\}.
	\end{cases}
	\end{equation}
\end{Definition}

\begin{Remark}\label{rem:homogeneous case}
	When $X$ is time-homogeneous, all the results in Subsection~\ref{subsec:finding equilibrium} hold, with $\Tc(\X)$, $\Ec(\X)$, $\ker(\tau)$, and $\Theta$ replaced by the corresponding ones in Definition~\ref{def:homogeneous case}. 
	Proofs of these statements are similar to, and in fact easier than, those in Subsection~\ref{subsec:finding equilibrium}, thanks to the homogeneity in time.
\end{Remark}


\section{A Detailed Case Study: Stopping of BES(1)} \label{sec:examples}

In this section, we recall the setup of Example~\ref{eg:stopping BES(1)}, with hyperbolic discount function
\begin{equation}\label{hyperbolic}
\delta(s):=\frac{1}{1+\beta s} \quad \forall s\ge 0,
\end{equation}
where $\beta>0$ is a fixed parameter. The state process $X$ is a one-dimensional Bessel process, i.e. $X_t=|W_t|$, $t\ge 0$, where $W$ is a one-dimensional Brownian motion. With $X$ being time-homogeneous, we will follow Definition~\ref{def:homogeneous case} and Remark~\ref{rem:homogeneous case}. Also, the classical optimal stopping problem \eqref{value function} reduces to 
\begin{equation}\label{eq:objective function example}
v(x) = \sup_{\tau\in\Tc}\E^{x}\left[\frac{X^{}_\tau}{1+\beta \tau}\right]\quad \hbox{for}\ x\in\R_+.
\end{equation}
This can be viewed as a real options problem, as explained below.

By \cite{TM03} and the references therein, when the surplus (or reserve) of an insurance company is much larger than the size of each individual claim, the dynamics of the surplus process can be approximated by $dR_t = \mu dt + \sigma dW_t$ with $\mu = p-\E[Z]$ {and} $\sigma =  \sqrt{\E[Z^2]}$.
Here, $p>0$ is the premium rate, and $Z$ is a random variable that represents the size of each claim. Suppose that an insurance company is non-profitable with $\mu =0$, i.e. it uses all the premiums collected to cover incoming claims. Also assume that the company is large enough to be considered ``systemically important'', so that when its surplus hits zero, the government will provide monetary support to bring it back to positivity, as in the recent financial crisis. The dynamics of $R$ is then a Brownian motion reflected at the origin. Thus, \eqref{eq:objective function example} describes a real options problem in which the management of a large non-profitable insurance company has the intention to liquidate or sell the company, and would like to decide when to do so. 

An unusual feature of \eqref{eq:objective function example} is that the discounted process $\{\delta(s)v(X_s^{x})\}_{s\ge 0}$ may not be a supermartingale. 
This makes solving \eqref{eq:objective function example} for the optimal stopping time $\widetilde\tau_x$, 
defined in \eqref{naive} with $t=0$, nontrivial. As shown in Appendix~\ref{subsec:ttau explicit}, we need an auxiliary value function, and use the method of time-change in \cite{pederson2000solving}. 

\begin{Proposition}\label{prop:ttau explicit}
	For any $x\in\R_+$, the optimal stopping time $\widetilde\tau_{x}$ of \eqref{eq:objective function example} (defined in \eqref{naive} with $t=0$) admits the explicit formula
	\begin{equation}\label{ttau explicit'}
	\widetilde\tau_{x} = \inf \left\{ s\ge 0 :  X^{x}_s\geq \sqrt{1/\beta+s} \right\}.
	\end{equation}
	Hence, the naive stopping policy $\widetilde\tau\in\Tc(\R_+)$, defined in \eqref{naive policy}, is given by
	\begin{equation}\label{ttau explicit}
	\widetilde\tau(x) := \indi_{[0,\sqrt{1/\beta})}(x) \quad \forall x\in\R_+.
	\end{equation}
\end{Proposition}

\begin{proof}
	The proof is delegated to Appendix~\ref{subsec:ttau explicit}.
	\qed\end{proof}

\subsection{Characterization of equilibrium policies}
\label{subsec:characterization}

\begin{Lemma}\label{lem:closure ker}
	For any $\tau\in\Tc(\R_+)$, consider $\tau'\in\Tc(\R_+)$ with $\ker(\tau') := \overline{\ker(\tau)}$. Then $\Lc^*\tau(x) = \Lc\tau(x)=\Lc\tau'(x)=\Lc^*\tau'(x)$ for all $x\in\R_+$. Hence, $\tau\in\Ec(\R_+)$ if and only if $\tau'\in\Ec(\R_+)$. 
\end{Lemma}

\begin{proof}
	If $x\in\R_+$ is in the interior of $\ker(\tau)$, $\Lc^*\tau(x) = \Lc\tau(x)=0=\Lc\tau'(x)=\Lc^*\tau'(x)$. Since a one-dimensional Brownian motion $W$ is monotone in no interval, if $x\in \ker(\tau')\setminus \ker(\tau)$, $\Lc^*\tau(x) = \Lc\tau(x)=0=\Lc\tau'(x)=\Lc^*\tau'(x)$; if $x\notin\ker(\tau')$, then $\Lc^*\tau(x) = \Lc\tau(x)=\inf\{s\ge 0: |W^x|\in\ker(\tau)\}= \inf\{s\ge 0: |W^x|\in\overline{\ker(\tau)}\}=\Lc\tau'(x)=\Lc^*\tau'(x)$. Finally, we deduce from \eqref{Theta'} and $\Lc^*\tau(x) = \Lc^*\tau'(x)$ for all $x\in\R_+$ that $\tau\in\Ec(\R_+)$ implies $\tau'\in\Ec(\R_+)$, and vice versa.
	\qed\end{proof}

The next result shows that every equilibrium policy corresponds to the hitting time to a certain threshold. Recall that a set $E\subset \R_+$ is called totally disconnected if the only nonempty connected subsets of $E$ are singletons, i.e. $E$ contains no interval.

\begin{Lemma} \label{lem:tau_a}
	For any $\tau\in\Ec(\R_+)$, define $a := \inf\left( \ker(\tau)\right)\ge 0$. Then, the Borel set 
	$E:=\{x\ge a:x\notin \ker(\tau)\}$ is totally disconnected. Hence, $\overline{\ker(\tau)}=[a,\infty)$ and the stopping policy $\tau_a$, defined by $\tau_a(x):= \indi_{[0,a)}(x)$ for $x\in\R_+$, belongs to $\Ec(\R_+)$.
\end{Lemma}

\begin{proof}
	The proof is delegated to Appendix \ref{app:threshold}
	\qed\end{proof}

The converse question is for which $a\ge 0$ the policy $\tau_a\in\Tc(\R)$ is an equilibrium. To answer this, we need to find the sets $S_{\tau_a}$, $C_{\tau_a}$, and $I_{\tau_a}$ in \eqref{Theta'}. By Definition~\ref{def:homogeneous case}, 
\begin{equation}\label{L and L* tau_a}
\Lc\tau_a(x) = T^x_a:=\inf\{s\ge 0 : X^{x}_s\ge a\},\quad \Lc^*\tau_a(x) = \inf\{s> 0 : X^{x}_s\ge a\}.
\end{equation} 
Note that  $\Lc\tau_a(x) = \Lc^*\tau_a(x)$, by an argument similar to the proof of Lemma~\ref{lem:closure ker}. 
As a result, for $x\ge a$, we have  $J(x;\Lc^*\tau_a(x)) = J(x;0) =x$, which implies
\begin{equation}\label{in I}
[a,\infty) \subseteq I_{\tau_a}.
\end{equation}
For $x\in[0,a)$, we need the lemma below, whose proof is delegated to Appendix~\ref{subsec:lem eta^a}.

\begin{Lemma}\label{lem:eta^a}
	Recall $T^x_a$ in \eqref{L and L* tau_a}. {On the space $\{(x,a)\in\R^2_+ ~:~ a\ge x\}$, define  
		\[
		\eta(x, a) := \E^{x}\left[\frac{a}{1+\beta T^x_a}\right].
		\]}
	\begin{itemize}
		\item [(i)] For any $a\ge 0$, $x\mapsto\eta(x,a)$ is strictly increasing and strictly convex on [0,a], and satisfies $0<\eta(0,a)<a$ and $\eta(a,a)=a$.
		\item [(ii)] {For any $x\ge 0$, $\eta(x,a)\to 0$ as $a\to\infty$.}
		\item [(iii)] There exists a unique {$a^*\in (0,1/\sqrt{\beta})$} such that for any $a>a^*$, there is a unique solution {$x^*(a)\in (0,a^*)$}
		of $\eta(x,a)=x$. Hence, $\eta(x,a)>x$ for $x< x^*(a)$ and $\eta(x,a)<x$ for $x> x^*(a)$. On the other hand, $a\le a^*$ implies that $\eta(x,a)>x$ for all $x\in (0,a)$.
	\end{itemize}
	The figure below illustrates $x\mapsto \eta(x,a)$ under different scenarios $a\le a^*$ and $a>a^*$.
	\begin{center}
		\begin{tikzpicture}
		\draw [<->] (3.5,0) -- (0,0) node [left] {$0$} -- (0,3.5);
		\draw (0,0) -- (3,3);
		\draw[color=red] (0,1) to [out=20,in=-140] (3,3);
		\node at (0.5,1.55) {\scriptsize{{\color{red}$\eta(x,a)$}}};
		\draw [dashed] (0,3) node [left] {$a$} -- (3,3)
		-- (3,0) node (a1) [below] {$a$};
		\node at (3.7,0) {$x$};
		\node (<a*) at (1.5,-1) {$a\le a^*$};
		\draw [fill] (3,3) circle [radius=.05];
		\end{tikzpicture}
		\begin{tikzpicture}
		\draw [<->] (3.5,0) -- (0,0) node [left] {$0$} -- (0,3.5);
		\draw (0,0) -- (3,3);
		\draw[color=red] (0,1) to [out=10,in=-125] (3,3);
		\node at (0.5,1.4) {\scriptsize{{\color{red}$\eta(x,a)$}}};
		\draw [dashed] (0,3) node [left] {$a$} -- (3,3)
		-- (3,0) node (a2) [below] {$a$};
		\draw [dashed] (1.55,1.55) -- (1.55,0) node [below] {$x^*(a)$};
		\node at (3.7,0) {$x$};
		\node (>a*) at (1.5,-1) {$a> a^*$};
		\draw [fill] (1.55,1.55) circle [radius=.05];
		\draw [fill] (3,3) circle [radius=.05];
		\end{tikzpicture}
	\end{center}
	\vspace{-0.3cm}
\end{Lemma}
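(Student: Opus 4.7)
The plan is to reduce the whole lemma to a single explicit integral representation for $\eta$. Setting $T_a:=\inf\{s\ge 0:|B_s|=a\}$ for a standard Brownian motion $B$ with $B_0=x\in[0,a]$, stationarity of increments gives $\eta(x,a)=a\,\E_x[1/(1+\beta T_a)]$. Combining the representation $1/(1+\beta T_a)=\int_0^\infty e^{-u(1+\beta T_a)}\,du$ with the classical Laplace transform $\E_x[e^{-\lambda T_a}]=\cosh(x\sqrt{2\lambda})/\cosh(a\sqrt{2\lambda})$, Fubini yields
\begin{equation}\label{eq:plan-eta}
\eta(x,a)=a\int_0^\infty e^{-u}\,\frac{\cosh(x\sqrt{2\beta u})}{\cosh(a\sqrt{2\beta u})}\,du.
\end{equation}
All three parts of the lemma will be derived from \eqref{eq:plan-eta}.

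For (i), I differentiate \eqref{eq:plan-eta} under the integral sign, justified by dominated convergence: the integrand and its first two $x$-derivatives are bounded uniformly in $x\in[0,a]$ by $e^{-u}$ times $1$, $\sqrt{2\beta u}$, and $2\beta u$ respectively, all integrable. Strict monotonicity and strict convexity of $\cosh$ on $[0,\infty)$ transfer immediately to $\eta(\cdot,a)$. The boundary values $\eta(a,a)=a$ and $\eta(0,a)\in(0,a)$ are immediate from \eqref{eq:plan-eta}, using $\cosh\ge 1$ with strict inequality on $(0,\infty)$. For (ii), the substitution $v=a\sqrt{2\beta u}$ recasts \eqref{eq:plan-eta} as $\eta(x,a)=(\beta a)^{-1}\int_0^\infty v\,e^{-v^2/(2\beta a^2)}\cosh(xv/a)/\cosh v\,dv$. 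For fixed $x$ and $a\to\infty$, the integrand converges pointwise to $v/\cosh v$, which is integrable on $[0,\infty)$, and for $a\ge 2x$ it is dominated by $v\cosh(v/2)/\cosh v$, still integrable. Dominated convergence then gives $\eta(x,a)=O(1/a)\to 0$.

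For (iii), introduce $h_a(x):=\eta(x,a)-x$, strictly convex on $[0,a]$ with $h_a(0)>0$ and $h_a(a)=0$. Hence $h_a$ admits at most one zero in $(0,a)$, and an interior zero exists iff the left derivative $h_a'(a^-)>0$, equivalently $\phi(a):=\partial_x\eta(a,a)>1$. From \eqref{eq:plan-eta},
\[
\phi(a)=a\sqrt{2\beta}\int_0^\infty e^{-u}\sqrt u\,\tanh(a\sqrt{2\beta u})\,du,
\]
a product of factors strictly increasing in $a$ (for each $u>0$), hence $\phi$ is strictly increasing with $\phi(0^+)=0$ and $\phi(\infty)=\infty$. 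This yields a unique $a^*>0$ with $\phi(a^*)=1$; the bound $a^*<1/\sqrt\beta$ follows by plugging $a=1/\sqrt\beta$ into $\phi$ and using $\tanh y\ge 1-2e^{-2y}$ on $(0,\infty)$ to verify $\phi(1/\sqrt\beta)>1$. For $a\le a^*$, no interior zero exists, so $h_a>0$ on $(0,a)$ by strict convexity together with $h_a(0)>0$. For $a>a^*$, a unique interior zero $x^*(a)\in(0,a)$ appears, and strict convexity forces $h_a>0$ on $[0,x^*(a))$ and $h_a<0$ on $(x^*(a),a)$.

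The \emph{main obstacle} is the sharper assertion $x^*(a)<a^*$ for all $a>a^*$, equivalently $\eta(a^*,a)<a^*$. Setting $\psi(a):=\eta(a^*,a)$, direct differentiation of \eqref{eq:plan-eta} gives $\psi(a^*)=a^*$ and $\psi'(a^*)=1-\phi(a^*)=0$, so the first-order test is inconclusive. My plan is first to establish $\psi''(a^*)<0$ from a second-derivative computation based on \eqref{eq:plan-eta}, which makes $a^*$ a strict local maximum of $\psi$, and then to rule out additional critical points of $\psi$ on $(a^*,\infty)$ through a careful sign analysis of the integrand of $\psi'$, which splits into a positive region $u<y_0^2/(2\beta a^2)$ and a negative region $u>y_0^2/(2\beta a^2)$, where $y_0>0$ is the unique root of $y\tanh y=1$. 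Combined with the limit $\psi(a)\to 0$ from part (ii), these facts force $\psi(a)<a^*$ throughout $(a^*,\infty)$. Once $\eta(a^*,a)<a^*$ is known, the intermediate value theorem applied to $h_a$ over $[0,a^*]$ together with uniqueness of the interior zero places $x^*(a)\in(0,a^*)$, completing (iii).
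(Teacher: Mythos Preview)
Your proposal is correct and follows essentially the same route as the paper: both derive the representation $\eta(x,a)=a\int_0^\infty e^{-u}\cosh(x\sqrt{2\beta u})\sech(a\sqrt{2\beta u})\,du$ via Fubini and the Laplace transform of the Bessel hitting time, read off (i) from the monotonicity and convexity of $\cosh$, obtain (ii) by dominated convergence, characterize $a^*$ as the unique root of $\partial_x\eta(a,a)=1$, and reduce $x^*(a)<a^*$ to $\eta(a^*,a)<a^*$ via the second-derivative computation $\partial_a^2\eta(a^*,a)\big|_{a=a^*}<0$. The only differences are cosmetic: the paper verifies $a^*<1/\sqrt\beta$ by numerically evaluating $\partial_x\eta(1/\sqrt\beta,1/\sqrt\beta)\approx 1.075$ rather than via your $\tanh$-bound, and for the last step the paper simply asserts unimodality of $a\mapsto\eta(a^*,a)$ on $[a^*,\infty)$ (appealing loosely to ``the definition of $\eta$ and part (ii)'') rather than arguing through the sign structure of the integrand of $\psi'$ as you propose.
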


\noindent
We now separate the case $x\in[0,a)$ into two sub-cases:
\begin{itemize}
	\item [1.] If $a\le a^*$, Lemma~\ref{lem:eta^a} (iii) shows that $J(x;\Lc^*\tau_a(x)) =\eta(x,a) > x$, and thus
	\begin{equation}\label{in C}
	[0,a)\subseteq C_{\tau_a}.
	\end{equation}
	\item [2.] If $a> a^*$, then by Lemma~\ref{lem:eta^a} (iii),
	\begin{equation}\label{in S,C,I}
	J(x;\Lc^*\tau_a(x)) = \eta(x,a)
	\begin{cases}
	>x,\quad &\hbox{if}\ x\in[0, x^*(a)), \\
	=x,\quad &\hbox{if}\ x = x^*(a), \\
	<x,\quad &\hbox{if}\ x\in (x^*(a),a).
	\end{cases}
	\end{equation}
\end{itemize} 
By \eqref{in I}, \eqref{in C}, \eqref{in S,C,I}, and the definition of $\Theta$ in \eqref{Theta'},
\begin{align}
&\hbox{if}\ a\le a^*,\quad  \Theta\tau_a(x) = \indi_{[0,a)}(x)+\tau_a(x) \indi_{[a,\infty)}(x)\equiv\tau_a(x);\nonumber\\
&\hbox{if}\ a> a^*,\quad \Theta\tau_a(x) =  \indi_{[0,x^*(a))}(x)+ \tau_a(x)\mathbf{1}_{\{x^*(a)\}\cup [a,\infty)}(x) \not\equiv \tau_a(x). \label{Thetatau_a 2}
\end{align}

\begin{Proposition}\label{prop:entire Ec}
	$\tau_a$ defined in Lemma~\ref{lem:tau_a} belongs to $\Ec(\R_+)$ if and only if $a\in[0,a^*]$, where $a^*>0$ is characterized by  
	$a^* \int_0^\infty e^{-s} \sqrt{2\beta s}\tanh(a^* \sqrt{2\beta s}) ds =1.$ 
	Moreover, 
	\begin{equation}\label{Ec=}
	\Ec(\R_+) = \{\tau\in \Tc(\R_+) ~:~ \overline{\ker(\tau)} = [a,\infty)\ \hbox{for some}\ a \in [0,a^*]\}.
	\end{equation}
\end{Proposition}

\begin{proof}
	The derivation of ``$\tau_a\in\Ec(\R_+)\iff a\in[0,a^*]$'' is presented in the discussion above the proposition. By the proof of Lemma~\ref{lem:eta^a} in Appendix~\ref{subsec:lem eta^a}, $a^*$ satisfies $\eta_a(a^*,a^*)=1$, which leads to the characterization of $a^*$. Now, for any $\tau\in\Tc(\R_+)$ with $\overline{\ker(\tau)} = [a,\infty)$ and $a \in [0,a^*]$, Lemma~\ref{lem:closure ker} implies $\tau\in\Ec(\R_+)$. For any $\tau\in\Ec(\R_+)$, set $a:=\inf(\ker(\tau))$. By Lemma~\ref{lem:tau_a}, $\overline{\ker(\tau)} = [a,\infty)$ and $\tau_a\in\Ec(\R_+)$. The latter implies $a \in [0,a^*]$ and thus completes the proof.
	\qed\end{proof}

\begin{Remark}[Estimating $a^*$]\label{rem:solve a^*}
	With $\beta=1$, numerical computation gives $a^* \approx 0.946475$. It follows that for a general $\beta>0$, $a^*\approx 0.946475/\sqrt{\beta}$.
\end{Remark}

For $a>a^*$, although $\tau_a\notin\Ec(\R_+)$ by Proposition~\ref{prop:entire Ec}, we may use the iteration in \eqref{tau0} to find a stopping policy in $\Ec(\R_+)$. Here, the repetitive application of $\Theta$ to $\tau_a$ has a simple structure: to reach an equilibrium, we need only {\it one} iteration. 


\begin{Remark}\label{rem:one iteration}
	Fix $a>a^*$, and recall $x^*(a)\in(0,a^*)$ in Lemma~\ref{lem:eta^a} (iii). By \eqref{Thetatau_a 2}, 
	\[
	\Theta\tau_a(x) = \tau'_{x^*(a)}(x) := \indi_{[0,x^*(a)]}(x)\quad \hbox{for all}\ x\in\R_+.
	\]
	Equivalently, $\ker(\Theta\tau_a)=\ker(\tau'_{x^*(a)})=(x^*(x),\infty)$. 
	Since $\overline{\ker(\tau'_{x^*(a)})}=[x^*(a),\infty)$ and $x^*(a)\in(0,a^*)$, we conclude from \eqref{Ec=} that $\tau'_{x^*(a)}\in\Ec(\R_+)$.   
\end{Remark}



Recall \eqref{ttau0} which connects the naive and sophisticated behaviors. With the naive strategy $\widetilde\tau\in\Tc(\R_+)$ given explicitly in \eqref{ttau explicit}, Proposition~\ref{prop:entire Ec} and Remark~\ref{rem:solve a^*} imply $\widetilde\tau\notin\Ec(\R_+)$. We may find the corresponding equilibrium as in Remark~\ref{rem:one iteration}.  

\begin{Remark}\label{rem:BES(1) sophisticated}
	Set $\widetilde{a} :=1/\sqrt{\beta}$. By \eqref{ttau explicit} and Remark~\ref{rem:one iteration}, $\Theta\widetilde\tau = \Theta\tau_{\widetilde{a}} = \tau'_{x^*(\widetilde{a})}\in\Ec(\R_+)$. In view of the proof of Lemma~\ref{lem:eta^a} in Appendix~\ref{subsec:lem eta^a}, we can find $x^*(\widetilde{a})$ by solving $\eta(1/\sqrt{\beta},x)=x$, i.e. $\frac{1}{\sqrt{\beta}} \int_0^\infty e^{-s} \cosh(x\sqrt{2\beta s})\sech(\sqrt{2s}) ds =x$, for $x$. Numerical computation shows $x^*(\widetilde{a}) \approx 0.92195/\sqrt{\beta}$, and thus $x^*(\widetilde a)<a^*$ by Remark~\ref{rem:solve a^*}. This verifies $\tau'_{x^*(\widetilde{a})}\in\Ec(\R_+)$, thanks to \eqref{Ec=}.
\end{Remark}

\begin{Remark}\label{rem:PP and ours}
	Recall ``static optimality'' and ``dynamic optimality'' in Remarks~\ref{rem:static optimality} and \ref{rem:dynamic optimality}. By Proposition~\ref{prop:ttau explicit}, $\widetilde\tau_x$ in \eqref{ttau explicit'} is statically optimal for  $x\in\R_+$ fixed, while $\widetilde\tau$ in \eqref{ttau explicit} is dynamically optimal. This is reminiscent of the situation in Theorem 3 of \cite{pedersen2016optimal}. Moreover, $\tau\in\Tc(\R_+)$ defined by $\tau(x):= \indi_{[0,b)}(x)$, $x\in\R_+$, is dynamically optimal for all $b\ge \sqrt{1/\beta}$, thanks again to  Proposition~\ref{prop:ttau explicit}.
\end{Remark}



\subsection{Further consideration on selecting equilibrium policies}
\label{sec:new class}
\label{subsec:optimal consistent}

In view of \eqref{Ec=}, it is natural to ask which equilibrium in $\Ec(\R_+)$ one should employ. According to standard Game Theory literature discussed below Corollary~\ref{coro:naive satisfies}, a sophisticated agent should employ the specific equilibrium generated by her initial stopping policy $\tau$, through the iteration \eqref{tau0}. Now, imagine that an agent is ``born'' sophisticated: 
she does not have any previously-determined initial stopping policy, and intends to apply an equilibrium policy straight away. A potential way to formulate her stopping problem is the following: 
\begin{equation}\label{eq:problem2}
\sup_{\tau\in\Ec(\R_+)} J(x;\Lc\tau(x)) = \sup_{a\in[0,a^*]}J(x;\Lc\tau_a(x))= \sup_{a\in [x,a^*\vee x]}\E^{x}\left[\frac{a}{1+\beta T^x_a}\right]. 
\end{equation}
where the first equality follows from Proposition~\ref{prop:entire Ec} and Lemma~\ref{lem:closure ker}.  

\begin{Proposition}\label{prop:tau_a* solves}
	$\tau_{a^*}\in\Ec(\R_+)$ solves \eqref{eq:problem2} for all $x\in\R_+$.
\end{Proposition}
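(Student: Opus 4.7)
The plan is to reduce the supremum in \eqref{eq:problem2'} to a one‐dimensional optimization over the threshold parameter $a$ and then apply Chebyshev's integral inequality to close the estimate. By symmetry of $g(x)=|x|$ and of the threshold policies $\tau_a$, I may assume $x\ge 0$ throughout.

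First I would dispose of the trivial case $x\ge a^*$: every admissible $\tau_a$ with $a\in[0,a^*]$ satisfies $a\le x$, so $\tau_a(t,x)=t$ and the payoff is $x$ independently of $a$; in particular $\tau_{a^*}$ is optimal. When $x<a^*$, for $a\in[0,x]$ the payoff is again $x$, while for $a\in(x,a^*]$ we have $|X^{t,x}_{\tau_a}|=a$ a.s., so the payoff equals $\eta(x,a)\ge x$ by Lemma~\ref{lem:eta^a}(iii) (with $\eta(x,a):=\E^{t,x}[a/(1+\beta(\tau_a-t))]$, first argument position, second argument threshold). The problem therefore reduces to showing
\[
\eta(x,a^*)\ \ge\ \eta(x,a)\qquad\text{for all } a\in[x,a^*].
\]

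To prove this inequality, I first derive a Laplace representation for $\eta$. Writing $\frac{1}{1+\beta u}=\int_0^\infty e^{-s(1+\beta u)}\,ds$, applying Fubini, and using the classical identity $\E^{t,x}[e^{-\lambda(\tau_a-t)}]=\cosh(x\sqrt{2\lambda})/\cosh(a\sqrt{2\lambda})$ for the first exit of Brownian motion from $(-a,a)$, I obtain, for $0\le x\le a$,
\[
\eta(x,a)\ =\ a\int_0^\infty e^{-s}\,\frac{\cosh(x\sqrt{2\beta s})}{\cosh(a\sqrt{2\beta s})}\,ds.
\]
Multiplicativity of this cosh-ratio along $x\le a\le a^*$ then gives
\[
\eta(x,a^*)-\eta(x,a)\ =\ \int_0^\infty e^{-s}\,F(s)\,G(s)\,ds,
\]
where $F(s):=\cosh(x\sqrt{2\beta s})/\cosh(a\sqrt{2\beta s})$ and $G(s):=a^*\cosh(a\sqrt{2\beta s})/\cosh(a^*\sqrt{2\beta s})-a$.

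Finally, since $y\mapsto y\tanh(yu)$ is increasing in $y$ for each $u\ge 0$, a short computation shows that $u\mapsto \cosh(yu)/\cosh(zu)$ is non-increasing on $[0,\infty)$ whenever $0\le y\le z$. Consequently both $F$ and $G$ are non-increasing in $s$. Under the probability measure $d\mu(s)=e^{-s}\,ds$ they are therefore similarly ordered, so Chebyshev's integral inequality yields
\[
\int_0^\infty e^{-s}F(s)G(s)\,ds\ \ge\ \Bigl(\int_0^\infty e^{-s}F(s)\,ds\Bigr)\Bigl(\int_0^\infty e^{-s}G(s)\,ds\Bigr)\ =\ \frac{\eta(x,a)}{a}\,\bigl(\eta(a,a^*)-a\bigr).
\]
The first factor is positive, and the second is non-negative by Lemma~\ref{lem:eta^a}(iii) applied with position $a\le a^*$ and threshold $a^*$. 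This establishes $\eta(x,a^*)\ge\eta(x,a)$ and concludes the proof. The main hurdle is spotting Chebyshev's inequality as the right mechanism: a direct differentiation of $a\mapsto\eta(x,a)$ produces an integrand whose sign flips in $s$, and a brute-force sign analysis of the resulting integral appears delicate.
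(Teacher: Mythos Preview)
Your argument is correct, and it lands on exactly the same key inequality as the paper, namely $\eta(x,a^*)\ge \eta(x,a)\cdot\eta(a,a^*)/a$, but via a genuinely different route. The paper argues probabilistically: on $\{\tau_a\le\tau_{a^*}\}$ it invokes the log-subadditivity $\delta(u+v)\ge\delta(u)\delta(v)$ of Assumption~\ref{asm:DI} pointwise to bound $\delta(\tau_{a^*}-t)\ge\delta(\tau_a-t)\,\delta(\tau_{a^*}-\tau_a)$, then conditions on $\Fc_{\tau_a}$ and uses the strong Markov property to pull out the factor $\eta(a,a^*)$, which exceeds $a$ by Lemma~\ref{lem:eta^a}(iii). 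You instead work in the Laplace picture: the cosh-ratio factorisation $\cosh(xu)/\cosh(a^*u)=[\cosh(xu)/\cosh(au)]\cdot[\cosh(au)/\cosh(a^*u)]$ is precisely the strong Markov property written as multiplicativity of Laplace transforms, and Chebyshev's integral inequality is the analytic face of log-subadditivity (indeed $\delta(u+v)=\int_0^\infty e^{-s}e^{-\beta su}e^{-\beta sv}\,ds\ge\bigl(\int e^{-s}e^{-\beta su}ds\bigr)\bigl(\int e^{-s}e^{-\beta sv}ds\bigr)=\delta(u)\delta(v)$ is itself an instance of Chebyshev). The paper's version is shorter and makes the role of decreasing impatience transparent, so it would transfer verbatim to any discount function satisfying Assumption~\ref{asm:DI}; your version is more computational but entirely self-contained once \eqref{eta^a formula} is available.
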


\begin{proof}
	Fix $a\in[0,a^*)$. For any $x\le a$, we have $T^x_a \le T^x_{a^*}$. Thus,
	\begin{align*}
	J(x;\Lc\tau_{a^*}(x)) &= \E^{x}\left[\frac{a^*}{1+\beta T^x_{a^*}}\right]= \E^{x}\left[ \E^x\left[\frac{a^*}{1+\beta T^x_{a^*}} \middle|\ \Fc_{T^x_a}\right]  \right]\\
	&\ge \E^{x}\left[\frac{1}{1+\beta T^x_{a}}\E^{a}\left[\frac{a^*}{1+\beta T^a_{a^*}}\right]\right] > \E^{x}\left[\frac{a}{1+\beta T^x_{a}}\right] = J(x;\Lc\tau_a(x)),
	\end{align*}
	where the last inequality follows from Lemma~\ref{lem:eta^a} (iii).
	\qed\end{proof}

The conclusion is twofold. First, it is possible, at least under current setting, to find one single equilibrium policy that solves \eqref{eq:problem2} {\it for all} $x\in\R_+$. Second, this ``optimal'' equilibrium policy $\tau_{a^*}$ is different from $\tau'_{x^*(\widetilde a)}$, the equilibrium generated by the naive policy $\widetilde\tau$ (see Remark~\ref{rem:BES(1) sophisticated}). This indicates that the map $\Theta^* := \lim_{n\to\infty}\Theta^n:\Tc(\X)\mapsto \Ec(\X)$ is in general nonlinear: while $\widetilde\tau\in\Tc(\Tc)$ is constructed from optimal stopping times $\{\widetilde\tau_{x}\}_{x\in\R_+}$ (or ``dynamically optimal'' as in Remark~\ref{rem:PP and ours}), $\Theta^*(\widetilde\tau)=\tau'_{x^*(\widetilde a)}\in\Ec(\X)$ is not optimal under \eqref{eq:problem2}. This is not that surprising once we realize $\widetilde\tau_{x}>\Lc\widetilde\tau(x)>\Lc\tau'_{x^*(\widetilde a)}(x)$ for some $x\in\R_+$. The first inequality is essentially another way to describe time inconsistency, and the second inequality follows from $\ker(\widetilde\tau)
\subset \ker(\Theta\widetilde\tau) = \ker(\tau'_{x^*(\widetilde a)})$.  
It follows that the optimality of $\widetilde\tau_x$ for $\sup_{\tau\in\Tc}J(x;\tau)$ does not necessarily translate to the optimality of $\tau'_{x^*(\widetilde a)}$ for $\sup_{\tau\in\Ec(\R_+)} J(x;\Lc\tau(x))$.



\appendix \normalsize

\section{Proofs for Section~\ref{sec:equilibrium}}\label{sec:proofs for Section 3}

Throughout this appendix, we will constantly use the notation 
\begin{equation}\label{tau_n}
\tau_n := \Theta^n \tau\quad n\in\N,\quad \hbox{for any}\ \tau\in\Tc(\X).
\end{equation}

\subsection{Proof of Proposition~\ref{prop:ttau in Ec}}\label{subsec:ttau in Ec}

Fix $(t,x)\in\X$. We deal with the two cases $\widetilde\tau(t,x) = 0$ and $\widetilde\tau(t,x) = 1$ separately.
If $\widetilde\tau(t,x) = 0$, i.e. $\widetilde\tau_{t,x} = t$, by \eqref{J=esssup J}
\[
g(x) =\sup_{\tau\in\Tc_t}\E^{t,x}[\delta(\tau-t) g(X_\tau)] \ge \E^{t,x}\left[\delta(\Lc^*\widetilde\tau(t,x)-t) g(X^{}_{\Lc^*\widetilde\tau(t,x)})\right],
\]
which implies $(t,x)\in S_{\widetilde\tau}\cup I_{\widetilde\tau}$. We then conclude from \eqref{Theta} that 
\begin{equation*}
\Theta\widetilde{\tau}(t,x)=
\left\{
\begin{array}{ll}
0 & \text{ if }(t,x)\in S_{\widetilde{\tau}} \\
\widetilde\tau(t,x) & \text{ if }(t,x)\in I_{\widetilde{\tau}}
\end{array}
\right.
\ = \widetilde\tau(t,x).
\end{equation*}

If $\widetilde\tau(t,x) =1$, then $\Lc^*\widetilde\tau(t,x) = \Lc\widetilde\tau(t,x) = \inf\{s\ge t :\widetilde\tau(s,X^{t,x}_s)=0 \} = \inf\{s\ge t :\widetilde\tau_{s,X^{t,x}_s}=s \}$. By \eqref{naive} and \eqref{Snell}, $\widetilde\tau_{s,X^{t,x}_s}=s$ means
\[
g(X^{t,x}_s(\omega)) = \esssup_{\tau\in\Tc_s}\E^{s,X^{t,x}_s(\omega)}[\delta(\tau-s) g(X_\tau)],
\]  
which is equivalent to 
\begin{align*}
\delta(s-t) g(X^{t,x}_s(\omega)) &= \delta(s-t) \esssup_{\tau\in\Tc_s}\E^{s,X^{t,x}_s(\omega)}[\delta(\tau-s) g(X_\tau)]\\
&= \esssup_{\tau\in\Tc_s}\E^{s,X^{t,x}_s(\omega)}[\delta(\tau-t) g(X_\tau)] = Z^{t,x}_s(\omega),
\end{align*}  
where the second equality follows from \eqref{identity}. We then conclude that $\Lc^*\widetilde\tau(t,x) = \inf\{s\ge t: \delta(s-t) g(X^{t,x}_s)= Z^{t,x}_s\}=\widetilde\tau_{t,x}$. This, together with \eqref{J=esssup J}, shows that
\begin{align*}
\E^{t,x}\left[\delta(\Lc^*\widetilde\tau(t,x)-t) g(X^{}_{\Lc^*\widetilde\tau(t,x)})\right] &= \E^{t,x}\left[\delta(\widetilde\tau_{t,x}-t) g(X^{}_{\widetilde\tau_{t,x}})\right]\ge g(x), 
\end{align*}
which implies $(t,x)\in I_{\widetilde\tau}\cup C_{\widetilde\tau}$. By \eqref{Theta}, we have  
\begin{equation*}
\Theta\widetilde{\tau}(t,x)=
\left\{
\begin{array}{ll}
\widetilde\tau(t,x) & \text{ if }(t,x)\in I_{\widetilde{\tau}} \\
1 & \text{ if }(t,x)\in C_{\widetilde{\tau}}
\end{array}
\right.
\ = \widetilde\tau(t,x). 
\end{equation*}
We therefore have $\Theta\widetilde\tau(t,x) = \widetilde\tau(t,x)$ for all $(t,x)\in\X$, i.e. $\widetilde\tau\in\Ec(\X)$.

\subsection{Derivation of Proposition~\ref{thm:main 1}}\label{subsec:LTheta^n decreasing}

To prove the technical result Lemma~\ref{lem:g<E} below, we need to introduce shifted random variables as formulated in Nutz \cite{Nutz-2013}. For any $ t\ge 0$ and $\omega\in\Omega$, we define the concatenation of $\omega$ and $\tilde{\omega}\in\Omega$ at time $t$ by
\[
(\omega\otimes_t\tilde{\omega})_s := \omega_s \indi_{[0,t)}(s) + [\tilde\omega_s-(\tilde\omega_t - \omega_t)] \indi_{[t,\infty)} (s),\quad s\ge 0. 
\]
For any $\Fc^{}_\infty$-measurable random variable $\xi:\Omega\mapsto\R$, we define the shifted random variable $[\xi]_{t,\omega}:\Omega\mapsto\R$, which is $\Fc^t_\infty$-measurable, by
\[
[\xi]_{t,\omega} (\tilde\omega):= \xi(\omega\otimes_t \tilde\omega),\quad \forall \tilde\omega\in\Omega. 
\]
Given $\tau\in\Tc$, we write $\omega\otimes_{\tau(\omega)}\tilde{\omega}$ as $\omega\otimes_\tau\tilde{\omega}$, and $[\xi]_{\tau(\omega),\omega} (\tilde\omega)$ as $[\xi]_{\tau,\omega} (\tilde\omega)$. A detailed analysis of shifted random variables can be found in \cite[Appendix A]{BH13-game}; Proposition A.1 therein implies that give $(t,x)\in\X$ fixed, any $\theta\in\Tc_t$ and $\Fc^t_\infty$-measurable $\xi$ with $\E^{t,x}[|\xi|]<\infty$ satisfy 
\begin{equation}\label{cond expect}
\E^{t,x}[\xi\mid\Fc^t_\theta](\omega) = \E^{t,x}\left[[\xi]_{\theta,\omega}\right]\quad \hbox{for a.e. $\omega\in\Omega$}. 
\end{equation}

\begin{Lemma}\label{lem:g<E} 
	For any $\tau\in\Tc(\X)$ and $(t,x)\in\X$, define $t_0:= \Lc^*\tau_1(t,x)\in\Tc_t$ and $s_0 := \Lc^*\tau(t,x)\in\Tc_t$, with $\tau_1$ as in \eqref{tau_n}. If $t_0\le s_0$, then for a.e. $\omega\in\{t <t_0\}$,  
	\[
	g(X^{t,x}_{t_0}(\omega)) \le \E^{t,x}\left[\delta(s_0-t_0)g(X^{}_{s_0})\mid \Fc^t_{t_0}\right] (\omega).
	\]
\end{Lemma}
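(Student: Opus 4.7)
The plan is to decompose $\{t<t_0\}$ into a trivial piece and the main piece $A:=\{t<t_0<s_0\}$, rewrite the right-hand side on $A$ as a Markovian value $J(t_0,X_{t_0};\Lc^*\tau(t_0,X_{t_0}))$, and then classify the behaviour of $\tau_1=\Theta\tau$ at $(t_0(\omega),X_{t_0}(\omega))$ via the three regions $S_\tau,I_\tau,C_\tau$ from \eqref{regions}. On $\{t<t_0=s_0\}$ both sides collapse to $g(X_{t_0})$ and the bound is trivial, so I will focus on $A$.

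The first step on $A$ is a shift identification. Since $s_0>t_0$, the definition of $\Lc^*\tau(t,x)$ forces $\tau(t_0,X_{t_0})(\omega)>t_0$, so Lemma \ref{rem:L and L*}(a) gives $\Lc^*\tau(t_0,X_{t_0})=\Lc\tau(t_0,X_{t_0})$. Combining the flow property $X^{t,x}_s(\omega\otimes_{t_0}\tilde\omega)=X^{t_0,X_{t_0}(\omega)}_s(\tilde\omega)$ for $s\ge t_0$ with the measurability of $\tau$, the shifted stopping time $[s_0]_{t_0,\omega}$ coincides with $\Lc^*\tau(t_0(\omega),X_{t_0}(\omega))\in\Tc_{t_0(\omega)}$, and via \eqref{cond expect} one obtains
\[
\E\left[\delta(s_0-t_0)g(X^{t,x}_{s_0})\mid\Fc_{t_0}\right](\omega)=J\bigl(t_0(\omega),X_{t_0}(\omega);\Lc^*\tau(t_0,X_{t_0})\bigr).
\]
The lemma thus reduces to showing $(t_0(\omega),X_{t_0}(\omega))\notin S_\tau$ for a.e.~$\omega\in A$.

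The second step exploits $t_0=\Lc^*\tau_1(t,x)>t$ via Lemma \ref{rem:L and L*}(d): either (a) $\tau_1(t_0,X_{t_0})(\omega)=t_0$, or (b) there exists $t_m\downarrow t_0(\omega)$ with $\tau_1(t_m,X_{t_m})(\omega)=t_m$. Unfolding \eqref{Theta} at $(t_0,X_{t_0})$ in case (a) gives three alternatives. The $I_\tau$ alternative, requiring $\tau(t_0,X_{t_0})(\omega)=t_0$, is excluded by $\tau(t_0,X_{t_0})(\omega)>t_0$ on $A$; the $C_\tau$ alternative, requiring $\Lc^*\tau(t_0,X_{t_0})(\omega)=t_0$, is excluded because the same shift computation gives $\Lc^*\tau(t_0,X_{t_0})(\omega)=s_0(\omega)>t_0(\omega)$. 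Only $(t_0,X_{t_0})\in S_\tau$ remains in case (a). In case (b), $s_0>t_m$ for large $m$ rules out the $I_\tau$ alternative, so each $(t_m,X_{t_m})$ lies in $S_\tau$ or in $C_\tau$ with $\Lc^*\tau(t_m,X_{t_m})(\omega)=t_m$; in the $C_\tau$ sub-case $g(X_{t_m})<J(t_m,X_{t_m};\Lc^*\tau(t_m,X_{t_m}))$, and passing to the limit via continuity of $g$ and dominated convergence grounded in \eqref{dominated} yields the desired $g(X_{t_0})\le J(t_0,X_{t_0};\Lc^*\tau(t_0,X_{t_0}))$.

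The hard part will be eliminating the $S_\tau$ scenario, since $g(X_{t_0})>J(t_0,X_{t_0};\Lc^*\tau(t_0,X_{t_0}))$ would contradict the desired inequality. The plan is to argue that this scenario cannot occur in isolation: if $(t_0,X_{t_0})\in S_\tau$ then $\tau_1\equiv t$ identically on the whole set $S_\tau$, and combining this with the first-entry definition of $t_0=\Lc^*\tau_1(t,x)$ and the right-continuity of sample paths forces an accumulation from above of stops of $\tau_1$, so case (b) is in force. If then infinitely many $(t_m,X_{t_m})$ sit in $S_\tau$, the limit point $(t_0,X_{t_0})$ lies in the closure of $S_\tau$; using continuity of $g$ along the path and dominated convergence applied to $J(t_m,X_{t_m};\Lc^*\tau(t_m,X_{t_m}))$, I plan to show that $(t_0,X_{t_0})$ necessarily sits in $I_\tau$, so $g(X_{t_0})=J(\cdot)$ and the inequality closes with equality. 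Securing the required path-wise continuity of $(s,y)\mapsto J(s,y;\Lc^*\tau(s,y))$ along $(t_m,X_{t_m}(\omega))$ without further a~priori regularity on $\tau$ is the technically most delicate part of the argument.
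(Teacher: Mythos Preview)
Your reduction to showing $(t_0(\omega),X_{t_0}(\omega))\notin S_\tau$ on $A=\{t<t_0<s_0\}$ is correct, but your plan for eliminating the $S_\tau$ scenario does not work. In case (a) you correctly arrive at $(t_0,X_{t_0})\in S_\tau$ as the only surviving alternative, and this is \emph{precisely} the obstruction you need to remove; your claim that ``the first-entry definition of $t_0=\Lc^*\tau_1(t,x)$ and right-continuity of paths forces an accumulation from above'' is false, since the infimum defining $\Lc^*\tau_1$ may simply be attained at $t_0$. Worse, your case (b) analysis backfires: for $t_m\downarrow t_0$ with $t_m<s_0$, the $I_\tau$ option is excluded (else $\tau(t_m,X_{t_m})=t_m$ forces $s_0\le t_m$), and the $C_\tau$ option is excluded too (if $\Lc^*\tau(t_m,X_{t_m})=t_m$ there exist $r_k\downarrow t_m$ with $\tau(r_k,X_{r_k})=r_k$, giving $s_0\le t_m$). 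So $(t_m,X_{t_m})\in S_\tau$ for large $m$, yielding $g(X_{t_m})>J(t_m,X_{t_m};\Lc^*\tau(t_m,X_{t_m}))$, the \emph{wrong} inequality to pass to the limit. The pathwise continuity of $(s,y)\mapsto J(s,y;\Lc^*\tau(s,y))$ you would need is unavailable for a general $\tau\in\Tc(\X)$.

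The paper's proof avoids the point $t_0$ altogether and looks instead at times $s\in(t,t_0)$ strictly \emph{below} $t_0$. There the key observation is immediate: $t_0=\Lc^*\tau_1(t,x)>s$ forces $\tau_1(s,X_s)>s$, so $(s,X_s)\notin S_\tau$ and hence
\[
g(X_s^{t,x}(\omega))\le \E^{s,X_s^{t,x}(\omega)}\bigl[\delta(\Lc^*\tau(s,X_s)-s)\,g(X_{\Lc^*\tau(s,X_s)})\bigr]
\le \E^{s,X_s^{t,x}(\omega)}\bigl[\delta(\Lc^*\tau(s,X_s)-[t_0]_{s,\omega})\,g(X_{\Lc^*\tau(s,X_s)})\bigr],
\]
the second bound using that $\delta$ is decreasing and $s\le t_0$. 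The right side is identified (via the shift at $s$, where $[s_0]_{s,\omega}=\Lc^*\tau(s,X_s)$) with the conditional expectation $\E[\delta(s_0-t_0)g(X_{s_0})\mid\Fc_s](\omega)$, a martingale in $s$. Taking a continuous version $M$ (martingale representation on the Brownian filtration) and sending $s\uparrow t_0$ through rationals gives $g(X_{t_0})\le M_{t_0}=\E[\delta(s_0-t_0)g(X_{s_0})\mid\Fc_{t_0}]$. The replacement of $\delta(\cdot-s)$ by $\delta(\cdot-t_0)$ is exactly what makes the right side a martingale and hence amenable to the limit; your approach, by working at $t_0$ and above, never gets access to the crucial ``$(s,X_s)\notin S_\tau$ for $s<t_0$'' information.
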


\begin{proof}
	For a.e. $\omega\in \{t< t_0\}\in\Fc_{t}$, we deduce from $t_0 (\omega) = \Lc^*\tau_1(t,x) (\omega)>t$ that for all $s\in\left(t,t_0(\omega)\right)$ we have $\tau_1(s,X^{t,x}_s(\omega))=1$ . By \eqref{tau_n} and \eqref{Theta}, this implies $(s,X^{t,x}_s(\omega))\notin S_{\tau}$ for all $s\in(t,t_0(\omega))$. Thus,
	\begin{align}\label{1st eqn}
	g(X^{t,x}_s(\omega))  &\le \E^{s,X^{t,x}_s(\omega)}\left[\delta(\Lc^*\tau(s,X^{}_s)-s)g\left(X^{}_{\Lc^*\tau(s,X^{}_s)}\right)\right]\quad \forall s\in\left(t,t_0(\omega)\right).
	\end{align}
	For any $s\in(t,t_0(\omega))$, note that
	$$
	[t_0]_{s,\omega} (\tilde\omega)= t_0(\omega\otimes_s\tilde\omega) = \Lc^*\tau_1(t,x)(\omega\otimes_s\tilde\omega)= \Lc^*\tau_1(s,X^{t,x}_s(\omega))(\tilde\omega),
	$$
	$\forall$ $\tilde\omega\in\Omega$. Since $t_0\le s_0$, similar calculation gives $[s_0]_{s,\omega} (\tilde\omega)= \Lc^*\tau(s,X^{t,x}_s(\omega))(\tilde\omega)$. We thus conclude from \eqref{1st eqn} that  
	\begin{align}\label{1st eqn'}
	g(X^{t,x}_s(\omega))  &\le \E^{s,X^{t,x}_s(\omega)}\left[\delta([s_0]_{s,\omega}-s)g\left([X^{}_{s_0}]_{s,\omega}\right)\right]\nonumber\\
	&\le \E^{s,X^{t,x}_s(\omega)}\left[\delta([s_0]_{s,\omega}-[t_0]_{s,\omega})g\left([X^{}_{s_0}]_{s,\omega}\right)\right],\quad \forall s\in\left(t,t_0(\omega)\right),
	\end{align}
	where the second line holds because $\delta$ is decreasing and also $\delta$ and $g$ are both nonnegative. On the other hand, by \eqref{cond expect}, it holds a.s. that 
	\begin{equation*}
	\E^{t,x}[\delta(s_0-t_0) g(X^{}_{s_0})\mid\Fc^t_s](\omega) = \E^{t,x}\left[\delta([s_0]_{s,\omega}-[t_0]_{s,\omega}) g([X^{t,x}_{s_0}]_{s,\omega})\right]\ \forall s\ge t,\ s\in\Q.
	\end{equation*}
	Note that we used the countability of $\Q$ to obtain the above almost-sure statement. This, together with \eqref{1st eqn'}, shows that 
	it holds a.s. that
	\begin{equation}\label{2nd eqn}
	g(X^{t,x}_s(\omega))\ 1_{\{(t,t_0(\omega))\cap\Q\}}(s) \le \E^{t,x}[\delta(s_0-t_0) g(X^{}_{s_0})\mid\Fc^t_s](\omega)\ 1_{\{(t,t_0(\omega))\cap\Q\}}(s).
	\end{equation}
	Since our sample space $\Omega$ is the canonical space for Brownian motion with the right-continuous Brownian filtration $\F$, the martingale representation theorem holds under current setting. This in particular implies that every martingale has a continuous version. Let $\{M_s\}_{s\ge t}$ be the continuous version of the martingale $\{\E^{t,x}[\delta(s_0-t_0) g(X^{}_{s_0})\mid\Fc^t_s]\}_{s\ge t}$. Then, \eqref{2nd eqn} immediately implies that it holds a.s. that
	\begin{equation}\label{3rd eqn}
	g(X^{t,x}_s(\omega))\ 1_{\{(t,t_0(\omega))\cap\Q\}}(s) \le M_s(\omega)\ 1_{\{(t,t_0(\omega))\cap\Q\}}(s).
	\end{equation}
	Also, using the right-continuity of $M$ and \eqref{cond expect}, one can show that for any $\tau\in\Tc_t$, $M_\tau = \E^{t,x}[\delta(s_0-t_0) g(X^{}_{s_0})\mid\Fc^t_\tau]$ a.s. Now, we can take some $\Omega^*\in\Fc_\infty$ with $\P[\Omega^*] =1$ such that for all $\omega\in\Omega^*$, \eqref{3rd eqn} holds true and $M_{t_0}(\omega) = \E^{t,x}[\delta(s_0-t_0) g(X^{}_{s_0})\mid\Fc^t_{t_0}](\omega)$. For any $\omega\in\Omega^*\cap\{t<t_0\}$, take $\{k_n\}\subset\Q$ such that $k_n >t$ and $k_n\uparrow t_0(\omega)$. Then, \eqref{3rd eqn} implies  
	$
	g(X^{t,x}_{k_n}(\omega)) \le M_{k_n}(\omega),\ \forall n\in\N. 
	$
	As $n\to\infty$, we obtain from the continuity of $s\mapsto X_s$ and $z\mapsto g(z)$, and the left-continuity of $s\mapsto M_s$ that 
	$
	g(X^{t,x}_{t_0}(\omega))  \le M_{t_0}(\omega) = \E^{t,x}[\delta(s_0-t_0) g(X^{}_{s_0})\mid\Fc^t_{t_0}](\omega). 
	$
	\qed\end{proof}

Now, we are ready to prove Proposition~\ref{thm:main 1}.

\begin{proof}[Proof of Proposition~\ref{thm:main 1}] 
	We will prove \eqref{ker increasing} by induction. We know that the result holds for $n=0$ by \eqref{1st iteration decreasing}. Now, assume that \eqref{ker increasing} holds for $n=k\in \N\cup\{0\}$, and we intend to show that \eqref{ker increasing} also holds for $n=k+1$. Recall the notation in \eqref{tau_n}. Fix $(t,x)\in\ker(\tau_{k+1})$, i.e. $\tau_{k+1}(t,x)=0$. If $\Lc^*\tau_{k+1}(t,x) = t$, then $(t,x)$ belongs to $I_{\tau_{k+1}}$. By \eqref{Theta},  we get $\tau_{k+2}(t,x) = \Theta\tau_{k+1}(t,x)= \tau_{k+1}(t,x) =0$, and thus $(t,x)\in\ker(\tau_{k+2})$, as desired. We therefore assume below that $\Lc^*\tau_{k+1}(t,x)>t$.
	
	By \eqref{Theta}, $\tau_{k+1}(t,x)=0$ implies 
	\begin{equation}\label{hehe}
	g(x)\ge \E^{t,x}[\delta(\Lc^*\tau_k(t,x)-t) g(X^{}_{\Lc^*\tau_k(t,x)})].
	\end{equation}
	Let $t_0 := \Lc^*\tau_{k+1}(t,x)$ and $s_0 := \Lc^*\tau_k(t,x)$. Under the induction hypothesis $\ker(\tau_k)\subseteq\ker(\tau_{k+1})$, we have $t_0\le s_0$, as $t_0$ and $s_0$ are hitting times to $\ker(\tau_{k+1})$ and $\ker(\tau_{k})$, respectively; see \eqref{L*}. Using \eqref{hehe}, $t_0\le s_0$, Assumption~\ref{asm:DI}, and $g$ being nonnegative, 
	\begin{align*}
	g(x) &\ge  \E^{t,x}[\delta(s_0-t) g(X^{}_{s_0})] \ge \E^{t,x}[\delta(t_0-t)\delta(s_0-t_0) g(X^{}_{s_0})]\\
	&=\E^{t,x}\left[\delta(t_0-t)\E^{t,x}\left[\delta(s_0-t_0)g(X^{}_{s_0})\mid\Fc^t_{t_0}\right]\right]\\
	&\ge \E^{t,x}\left[\delta(t_0-t)g(X^{}_{t_0})\right],
	\end{align*}
	where the second line follows from the tower property of conditional expectations, and the third line is due to Lemma~\ref{lem:g<E}. This implies $(t,x)\notin C_{\tau_{k+1}}$, and thus
	\begin{equation}\label{lala}
	\tau_{k+2}(t,x) = \left\{
	\begin{array}{ll}
	0 & \text{ for }(t,x)\in S_{\tau_1} \\
	\tau_{k+1} (t,x) & \text{ for }(t,x)\in I_{\tau_1} 
	\end{array}
	\right. \ = 0.
	\end{equation}
	That is, $(t,x)\in\ker(\tau_{k+2})$. Thus, we conclude that $\ker(\tau_{k+1})\subseteq\ker(\tau_{k+2})$, as desired. 
	
	It remains to show that $\tau_0$ defined in \eqref{tau0} is a stopping policy. Observe that for any $(t,x)\in\X$, $\tau_0(t,x) =0$ if and only if $\Theta^n\tau(t,x)=0$, i.e. $(t,x)\in\ker(\Theta^n\tau)$, for $n$ large enough. This, together with \eqref{ker increasing}, implies that
	\[
	\{(t,x)\in\X : \tau_0(t,x)=0 \} = \bigcup_{n\in\N}\ker(\Theta^n\tau)\in \Bc(\X). 
	\]  
	Hence, $\tau_0:\X\mapsto\{0,1\}$ is Borel measurable, and thus an element in $\Tc(\X)$.
	\qed\end{proof}


\subsection{Proof of Proposition~\ref{prop:naive satisfies}}\label{subsec:naive satisfies}

Fix $(t,x)\in\ker(\widetilde\tau)$. Since $\widetilde\tau(t,x)=0$, i.e. $\widetilde\tau_{t,x}=t$, \eqref{naive}, \eqref{Snell}, and \eqref{J=esssup J} imply
\[
g(x) = \sup_{\tau\in\Tc_t} \E^{t,x}[\delta(\tau-t) g(X_\tau)] \ge \E^{t,x} \left[\delta(\Lc^*\widetilde\tau(t,x)-t) g(X_{\Lc^*\widetilde\tau(t,x)})\right].
\]
This shows that $(t,x)\in S_{\widetilde\tau}\cup I_{\widetilde\tau}$. Thus, we have $\ker(\widetilde\tau)\subseteq S_{\widetilde\tau}\cup I_{\widetilde\tau}$. It follows that
\[
\ker(\widetilde\tau) = (\ker(\widetilde\tau)\cap S_{\widetilde\tau}) \cup (\ker(\widetilde\tau)\cap I_{\widetilde\tau}) \subseteq S_{\widetilde\tau} \cup (\ker(\widetilde\tau)\cap I_{\widetilde\tau}) = \ker(\Theta\widetilde\tau),
\]
where the last equality follows from \eqref{Theta}.

\subsection{Derivation of Theorem~\ref{thm:main 2}}\label{subsec:proof of lem:technical}

\begin{Lemma}\label{lem:technical}
	Suppose Assumption~\ref{asm:DI} holds and $\tau\in\Tc(\X)$ satisfies \eqref{1st iteration decreasing}. Then $\tau_0$ defined in \eqref{tau0} satisfies
	\[
	\Lc^*\tau_0(t,x) = \lim_{n\to\infty} \Lc^*\Theta^n\tau(t,x),\quad \forall (t,x)\in\X.
	\]
\end{Lemma}

\begin{proof}
	We will use the notation in \eqref{tau_n}. Recall that $\ker(\tau_{n})\subseteq\ker(\tau_{n+1})$ for all $n\in\N$ and $\ker(\tau_0) = \bigcup_{n\in\N}\ker(\tau_n)$ from Proposition~\ref{thm:main 1}. By \eqref{L*}, this implies that $\{\Lc^*\tau_n(t,x)\}_{n\in\N}$ is a nonincreasing sequence of stopping times, and
	\[
	\Lc^*\tau_0(t,x) \le t_0:=\lim_{n\to\infty} \Lc^*\tau_n(t,x).
	\]
	It remains to show that $\Lc^*\tau_0(t,x) \ge t_0$. We deal with the following two cases.
	
	(i) On $\{\omega\in\Omega : \Lc^*\tau_0(t,x)(\omega)=t\}$: By \eqref{L*}, there must exist a sequence $\{t_m\}_{m\in\N}$ in $\R_+$, depending on $\omega\in\Omega$, such that $t_m\downarrow t$ and $\tau_0(t_m, X^{t,x}_{t_m}(\omega)) = 0$ for all $m\in\N$. For each $m\in\N$, by the definition of $\tau_0$ in \eqref{tau0}, there exists $n^*\in\N$ large enough such that $\tau_{n^*}(t_m, X^{t,x}_{t_m}(\omega)) = 0$, which implies $\Lc^*\tau_{n^*}(t,x)(\omega)\le t_m$. Since $\{\Lc^*\tau_n(t,x)\}_{n\in\N}$ is nonincreasing, we have $t_0(\omega)\le \Lc^*\tau_{n^*}(t,x)(\omega)\le t_m$. With $m\to\infty$, we get $t_0(\omega)\le t=\Lc^*\tau_0(t,x)(\omega)$.  
	
	(ii) On $\{\omega\in\Omega : \Lc^*\tau_0(t,x)(\omega)>t\}$: Set $s_0:=\Lc^*\tau_0(t,x)$. If $\tau_0(s_0(\omega), X^{t,x}_{s_0}(\omega))=0$, then by \eqref{tau0} there exists $n^*\in\N$ large enough such that $\tau_{n^*}(s_0(\omega), X^{t,x}_{s_0}(\omega))=0$. Since $\{\Lc^*\tau_n(t,x)\}_{n\in\N}$ is nonincreasing, $t_0(\omega)\le \Lc^*\tau_{n^*}(t,x)(\omega)\le s_0(\omega)$, as desired. If $\tau_0(s_0(\omega), X^{t,x}_{s_0}(\omega))=1$, then by \eqref{L*} there exist a sequence $\{t_m\}_{m\in\N}$ in $\R_+$, depending on $\omega\in\Omega$, such that $t_m\downarrow s_0(\omega)$ and $\tau_0(t_m, X^{t,x}_{t_m}(\omega)) = 0$ for all $m\in\N$. Then we can argue as in case (i) to show that $t_0(\omega)\le s_0(\omega)$, as desired.
	\qed\end{proof}

Now, we are ready to prove Theorem~\ref{thm:main 2}.

\begin{proof}[Proof of Theorem~\ref{thm:main 2}]
	By Proposition~\ref{thm:main 1}, $\tau_0\in\Tc(\X)$ is well-defined. For simplicity, we will use the notation in \eqref{tau_n}. Fix $(t,x)\in\X$. If $\tau_0(t,x)=0$, by \eqref{tau0} we have $\tau_n(t,x)=0$ for $n$ large enough. Since $\tau_{n}(t,x) = \Theta\tau_{n-1}(t,x)$, we deduce from ``$\tau_n(t,x)=0$ for $n$ large enough'' and \eqref{Theta} that $(t,x)\in S_{\tau_{n-1}}\cup I_{\tau_{n-1}}$ for $n$ large enough. That is, 
	$
	g(x)\ge \E^{t,x}\left[\delta(\Lc^*\tau_{n-1}(t,x)-t) g(X_{\Lc^*\tau_{n-1}(t,x)})\right]\ \hbox{for $n$ large enough}.
	$   
	With $n\to\infty$, the dominated convergence theorem and Lemma~\ref{lem:technical} yield 
	\[
	g(x)\ge \E^{t,x}\left[\delta(\Lc^*\tau_{0}(t,x)-t) g(X_{\Lc^*\tau_{0}(t,x)})\right],
	\]   
	which shows that $(t,x)\in S_{\tau_0}\cup I_{\tau_0}$. We then deduce from \eqref{Theta} and $\tau_0(t,x)=0$ that $\Theta\tau_0(t,x) = \tau_0(t,x)$. On the other hand, if $\tau_0(t,x)=1$, by \eqref{tau0} we have $\tau_n(t,x)=1$ for $n$ large enough. Since $\tau_{n}(t,x) = \Theta\tau_{n-1}(t,x)$, we deduce from ``$\tau_n(t,x)=1$ for $n$ large enough'' and \eqref{Theta} that $(t,x)\in C_{\tau_{n-1}}\cup I_{\tau_{n-1}}$ for $n$ large enough. That is, 
	$$
	g(x)\le \E^{t,x}\left[\delta(\Lc^*\tau_{n-1}(t,x)-t) g(X_{\Lc^*\tau_{n-1}(t,x)})\right]\ \hbox{for $n$ large enough}.
	$$
	With $n\to\infty$, the dominated convergence theorem and Lemma~\ref{lem:technical} yield   
	\[
	g(x)\le \E^{t,x}\left[\delta(\Lc^*\tau_{0}(t,x)-t) g(X_{\Lc^*\tau_{0}(t,x)})\right],
	\]   
	which shows that $(t,x)\in C_{\tau_0}\cup I_{\tau_0}$. We then deduce from \eqref{Theta} and $\tau_0(t,x)=1$ that $\Theta\tau_0(t,x) = \tau_0(t,x)$.
	We therefore conclude that $\tau_0\in\Ec(\X)$. 
	\qed\end{proof}


\section{Proofs for Section~\ref{sec:examples}}

\subsection{Derivation of Proposition~\ref{prop:ttau explicit}}\label{subsec:ttau explicit}

In the classical case of exponential discounting, \eqref{identity} ensures that for all $s\ge 0$,
\begin{equation}\label{=supermart.}
\delta(s)v(X_s^{x})=\sup_{\tau\in\Tc}\E^{X^{x}_s}\left[\delta(s+\tau) g(X^{}_\tau)\right]= \sup_{\tau\in\Tc_s}\E^{x}\left[\delta(\tau) g(X^{}_\tau) \mid \Fc_s\right],
\end{equation}
which shows that $\{\delta(s)v(X_s^{x})\}_{s\ge 0}$ is a supermartingale. Under hyperbolic discounting \eqref{hyperbolic}, since $\delta(r_1)\delta(r_2)<\delta(r_1+r_2)$ for all $r_1,r_2\ge 0$, $\{\delta(s)v(X_s^{x})\}_{s\ge t}$ may no longer be a supermatingale, as the first equality in the above equation fails. 

To overcome this, we introduce the auxiliary value function: for $(s,x)\in \R^2_+$,
\begin{align}\label{eq:objective function*}
V(s,x)&:=\sup_{\tau\in\Tc}\E^{x}\left[\delta(s+\tau) g(X^{}_\tau)\right] =\sup_{\tau\in\Tc}\E^{x}\left[\frac{X^{}_\tau}{1+\beta(s+\tau)}\right].
\end{align}
By definition, $V(0,x)=v(x)$, and $\{V(s,X^{x}_s) \}_{s\ge 0}$ is a supermartingale as $V(s,X^{x}_s)$ is equal to the right hand side of \eqref{=supermart.}.

\begin{proof}[Proof of Proposition~\ref{prop:ttau explicit}]
	Recall that $X_s=|W_s|$ for a one-dimensional Brownian motion $W$. Let $y\in\R$ be the initial value of $W$, and define $\bar V(s,y) := V(s,|y|)$. The associated variational inequality for $\bar V(s,y)$ is the following: for $(s,y)\in [0,\infty)\times\R$,
	\begin{equation}\label{variational}
	\min\left\{w_s(s,y)+\frac{1}{2}w_{yy}(s,y),\ w(s,y)-\frac{|y|}{1+\beta s}\right\} =0.
	\end{equation} 
	Taking $s\mapsto b(s)$ as the free boundary to be determined, we can rewrite \eqref{variational} as \begin{equation} \label{pde v}
	\begin{cases}
	w_{s}(s,y) + \frac{1}{2}w_{yy}(s,y)=0,\ \  w(s,y)>\frac{|y|}{1+\beta s},\ \ \ &\hbox{for}\ |y|< b(s);\\
	w(s,y)=\frac{|y|}{1+\beta s},\ \ \ &\hbox{for}\ |y|\ge b(s).
	\end{cases}
	\end{equation}
	Following \cite{pederson2000solving}, we propose the ansatz
	$ w(s,y)=\frac{1}{\sqrt{1+\beta s}} h(\frac{y}{\sqrt{1+\beta s}}). $
	Equation \eqref{pde v} then becomes a one-dimensional free boundary problem:
	\begin{equation}
	\label{ode h}
	\left\{
	\begin{array}{ll}
	-\beta zh'(z)+h''(z)=\beta h(z),\ \ h(z)>|z|,\ \ &\quad \hbox{for}\ |z|<\frac{b(s)}{\sqrt{1+\beta s}};\\
	h(z)=|z|,\ \ &\quad \hbox{for} \ |z|\ge \frac{b(s)}{\sqrt{1+\beta s}}.
	\end{array}
	\right.
	\end{equation}
	Since the variable $s$ does not appear in the above ODE, we take $b(s) = \alpha \sqrt{1+ \beta s}$ for some $\alpha \ge 0$.
	The general solution of the first line of \eqref{ode h} is
	\begin{equation*}
	h(z) = e^{\frac{\beta}{2}z^2}\left(c_1+c_2 \sqrt{\frac{2}{\beta}}\int_0^{\sqrt{{\beta}/{2}} z} e^{-u^2}du\right),\quad
	(c_1, c_2) \in \R^2\; .
	\end{equation*}
	The second line of \eqref{ode h} gives $h(\alpha) =\alpha$. 
	We then have
	\begin{equation*}
	w(s,y) = 
	\begin{cases}
	\frac{e^{\frac{\beta y^2}{2(1+\beta s)}}}{\sqrt{1+\beta s}}\left(c_1 + c_2 \sqrt{\frac{2}{\beta}}\int_0^{\frac{\sqrt{{\beta}/{2}} y}{\sqrt{1+\beta s}}} e^{-u^2}du\right), & |y|< \alpha \sqrt{1+\beta s};\\
	\frac{|y|}{1+ \beta s}, & |y|\ge \alpha \sqrt{1+\beta s}.
	\end{cases}
	\end{equation*}
	To find the parameters $c_1, c_2$ and $\alpha$, we equate the partial derivatives of $(s,y)\mapsto w(s,y)$ obtained on both sides of the free boundary. This yields the equations
	\begin{equation*}
	\label{eq:system conditions}
	\alpha = e^{\frac{\beta}{2}\alpha^2}\left(c_1+c_2\sqrt{\frac{2}{\beta}}\int_{0}^{\sqrt{{\beta}/{2}}\alpha}e^{-u^2}du\right) \quad \text{and}\quad \text{sgn}(x)-c_2 = \text{sgn}(x)\alpha^2 \beta.
	\end{equation*}
	The last equation implies $c_2=0$. This, together with the first equation, shows that $\alpha = 1/\sqrt{\beta}$ and $c_1 = \alpha e^{-1/2}$.
	Thus, we obtain
	\begin{equation}
	\label{eq:good v}
	w(s,y) = 
	\begin{cases}
	\frac{1}{\sqrt{\beta}\sqrt{1+\beta s}}\exp\left(\frac{1}{2}\left(\frac{\beta y^2}{1+\beta s}-1\right)\right), & |y|<\sqrt{1/\beta+s},\\
	\frac{|y|}{1+ \beta s},  & |y|\ge\sqrt{1/\beta+s}.
	\end{cases}
	\end{equation}
	Note that $w(s,y)> \frac{|y|}{1+ \beta s}$ for $|y|<\sqrt{1/\beta+s}$. Indeed, by defining the function $h(y) := \frac{1}{\sqrt{\beta}\sqrt{1+\beta s}}\exp\left(\frac{1}{2}\left(\frac{\beta y^2}{1+\beta s}-1\right)\right)-\frac{y}{1+\beta s}$ and observing that $h(0)>0$, $h(\sqrt{1/\beta+s})=0$, and $h'(y)<\frac{1}{1+\beta s} -\frac{1}{1+\beta s}=0$ for all $y\in(0,\sqrt{1/\beta+s})$, we conclude $h(y)>0$ for all $y\in[0,\sqrt{1/\beta+s})$, or $w(s,y)>  \frac{|y|}{1+ \beta s}$ for $ |y|<\sqrt{1/\beta+s}$. Also note that $w$ is $\Cc^{1,1}$ on $[0,+\infty)\times \R$, and $\Cc^{1,2}$ on the domain
	$\{(s,y)\in [0,\infty)\times\R ~:~ |y|<\sqrt{1/\beta+s }\}.$ 
	Moreover, by \eqref{eq:good v}, $w_s(s,y) + \frac{1}{2}w_{yy}(s,y)<0$ for $|y|>\sqrt{1/\beta + s)}$. 
	We then conclude from the standard verification theorem (see e.g. \cite[Theorem 3.2]{OS05})  that $\bar V(s,y) = w(s,y)$ is a smooth solution of \eqref{pde v}. This implies that $\{\bar V(s,W^{y}_s)\}_{s\ge 0}$ is a supermartingale, and $\{\bar V(s\wedge\tau^*_{y},W^{y}_{s\wedge\tau^*_{y}})\}_{s\ge 0}$ is a true martingale, with $\tau^*_{y} := \inf\{s\ge 0: |W^y_s|\ge \sqrt{1/\beta+s}\}$. 
	It then follows from standard arguments that $\tau^*_y$ is the smallest optimal stopping time of $\bar V(0,y)$, and thus $\hat \tau_{x}:=\inf\{s\ge 0: X^x_s\ge \sqrt{1/\beta+s}\}$ is the smallest optimal stopping time of \eqref{eq:objective function example}. 
	In view of Proposition \ref{prop:standard result},  $\widetilde{\tau}_{x} = \hat \tau_{x}$.
	\qed\end{proof}

\subsection{Proof of Lemma \ref{lem:tau_a}}\label{app:threshold}
First, we prove that $E$ is totally disconnected. 
If $\ker(\tau)=[a,\infty)$, then $E=\emptyset$ and there is nothing to prove. Assume that there exists $x^*> a$ such that $x^*\notin \ker(\tau)$. Define
\begin{equation*}
\ell := \sup \brace{b\in \ker(\tau) ~:~ b< x^*} \And u:=\inf \brace{b\in \ker(\tau) ~:~ b>x^*}.
\end{equation*}
We claim that $\ell=u=x^*$. Assume to the contrary $\ell<u$. Then $\tau(x)=1$ for all $x\in (\ell,u)$. Thus, given $y\in(\ell,u)$, $\Lc^*\tau(y) = T^y_{} :=\inf\{s\ge 0 : X^y_s \notin (\ell,u)\}>0$, and
\begin{equation}\label{J<y}
J(y; \Lc^*\tau(y)) = \E^{y}\left[\frac{X_{T^y_{}}}{1+\beta T^y_{}}\right] < \E^{y}[X_{T^y_{}}] = \ell \P[X_{T^y_{}}=\ell]+ u\P[X_{T^y_{}}=u].
\end{equation}
Since $X_s=|W_s|$ for a one-dimensional Brownian motion $W$ and $0<\ell<y<u$, by the optional sampling theorem $\P[X_{T^y_{}}=\ell] = \P[W^y_{s}\ \hbox{hits $\ell$ before hitting $u$}] = \frac{u-y}{u-\ell}$ and $\P[X_{T^y_{}}=u]=\P[W^y_{s}\ \hbox{hits $u$ before hitting $\ell$}] =\frac{y-\ell}{u-\ell}$. This, together with \eqref{J<y}, gives $J(y; \Lc^*\tau(y)) < y$. This implies $y\in S_\tau$, and thus $\Theta\tau(y)=0$ by \eqref{Theta'}. Then $\Theta\tau(y)\neq \tau(y)$, a contradiction to $\tau\in\Ec(\R_+)$. This already implies that  $E$ is totally disconnected, and thus $\overline{\ker(\tau)}=[a,\infty)$. The rest of the proof follows from Lemma~\ref{lem:closure ker}.

\subsection{Proof of Lemma~\ref{lem:eta^a}}
\label{subsec:lem eta^a}

(i) Given $a\ge 0$, it is obvious from definition that $\eta(0,a)\in(0,a)$ and $\eta(a,a)=a$. 
Fix $x\in(0,a)$, and let $f^{x}_a$ denote the density of $T^x_a$. We obtain
\begin{equation}\label{E[1/1+T]}
\begin{split}
\E^{x}\left[\frac{1}{1+\beta T^x_a}\right] &= \int_0^\infty\frac{1}{1+\beta t}f^x_a(t)dt = \int_0^\infty \int_0^\infty e^{-(1+\beta t)s}f^x_a(t)ds\ dt\\
&= \int_0^\infty e^{-s}\left(\int_0^\infty e^{-\beta st}f^x_a(t)dt\right)\ ds = \int_0^\infty e^{-s} \E^{x}[e^{-\beta sT^x_a}] ds.
\end{split}
\end{equation}
Since $T^x_a$ is the first hitting time of a one-dimensional Bessel process, we compute its Laplace transform using Theorem 3.1 of \cite{Kent78} (or Formula 2.0.1 on p. 361 of \cite{BS-book-2002}):
\begin{equation}\label{Laplace}
\E^{x}\left[e^{-\frac{\lambda^2}{2} T^x_a}\right] = \frac{\sqrt{x} I_{-\frac12}(x\lambda)}{\sqrt{a} I_{-\frac12}(a\lambda)}= \cosh(x\lambda)\sech(a\lambda),\ \ \quad \hbox{for}\ x\le a.
\end{equation}
Here, $I_\nu$ denotes the modified Bessel function of the first kind. 
Thanks to the above formula with $\lambda=\sqrt{2\beta s}$, we obtain from \eqref{E[1/1+T]} that
\begin{equation}\label{eta^a formula}
\eta(x,a) = a \int_0^\infty e^{-s} \cosh(x\sqrt{2\beta s})\sech(a\sqrt{2\beta s}) ds.
\end{equation}
It is then obvious that $x\mapsto\eta(x,a)$ is strictly increasing. Moreover, 
\[
\eta_{xx}(x,a) = 2a\beta^2 \int_0^\infty e^{-s} s\cosh(x\sqrt{2\beta s})\sech(a\sqrt{2 \beta s}) ds>0\ \quad \hbox{for}\ x\in[0,a],
\]
which shows the strict convexity.

(ii) This follows from \eqref{eta^a formula} and the dominated convergence theorem.

(iii) {We will first prove the desired result with $x^*(a)\in (0,a)$, and then upgrade it to $x^*(a)\in (0,a^*)$.} Fix $a\ge 0$. In view of the properties in (i), 
we observe that the two curves $y=\eta(x,a)$ and $y=x$ intersect at some $x^*(a)\in(0,a)$ if and only if $\eta_x(a,a)>1$. 
Define $k(a):=\eta_x(a,a)$. 
By \eqref{eta^a formula}, 
\begin{equation}\label{k}
k(a)=a\int_0^\infty e^{-s} \sqrt{2\beta s}\tanh(a\sqrt{2\beta s}) ds.
\end{equation}
Thus, we see that $k(0)=0$ and $k(a)$ is strictly increasing on $(0,1)$, since for any $a>0$,
$$k'(a)=\int_0^\infty e^{-s} \sqrt{2s}\left(\tanh(a\sqrt{2s})+\frac{a\sqrt{2s}}{\cosh^2(a\sqrt{2s})}\right) ds >0.$$
By numerical computation, 
$k(1/\sqrt{\beta}) =\int_0^\infty e^{-s} \sqrt{2s}\tanh(\sqrt{2s}) ds \approx 1.07461 >1.$ 
It follows that there must exist $a^*\in(0,1/\sqrt{\beta})$ such that $k(a^*)=\eta_{x}(a^*,a^*)=1$. Monotonicity of $k(a)$ then gives the desired result.

Now, for any $a> a^*$, we intend to upgrade the previous result to $x^*(a)\in(0,a^*)$. Fix $x\ge 0$. 
By the definition of $\eta$ and (ii), on the domain $a\in[x,\infty)$, the map $a\mapsto \eta(x,a)$ must either first increases and then decreases to $0$, or directly decreases down to $0$. From \eqref{eta^a formula}, 
we have
$$
\eta_a(x,x) = 1-x\int_0^\infty e^{-s} \sqrt{2\beta s}\tanh(x\sqrt{2\beta s}) ds = 1-k(x),
$$
with $k$ as in \eqref{k}. Recalling $k(a^*)=1$, we have $\eta_a(a^*,a^*)=0$. Notice that
\begin{align*}
\eta_{aa}(a^*,a^*) &= -\frac{2}{a^*}k(a^*) -2 \beta a^* + a^*\int_0^\infty 4\beta s e^{-s}\tanh^2(a^*\sqrt{2\beta s}) ds\\
&\le  -\frac{2}{a^*} + 2\beta a^* <0,
\end{align*}
where the second line follows from $\tanh(x)\le 1$ for $x\ge 0$ and $a^*\in (0,1/\sqrt{\beta})$. Since $\eta_a(a^*,a^*)=0$ and $\eta_{aa}(a^*,a^*)<0$, we conclude that on the domain $a\in [a^*,\infty)$, the map $a\mapsto\eta(a^*,a)$ decreases down to $0$. Now, for any $a>a^*$, since $\eta(a^*,a) < \eta(a^*,a^*)= a^*$, we must have $x^*(a)<a^*$.

\bibliographystyle{siam}
\bibliography{refs}

\end{document}